\documentclass[12pt]{article}
\usepackage[centertags]{amsmath}
\usepackage{amsfonts, amsthm, amssymb}
\usepackage{graphicx}
\usepackage{float}
\usepackage{dsfont}
\usepackage{cite}
\usepackage{epstopdf}
\usepackage{subfigure}
\usepackage{hyperref}
\usepackage{yhmath}
\newtheorem{thm}{Theorem}[section]
\newtheorem{cor}[thm]{Corollary}
\newtheorem{lem}[thm]{Lemma}

\theoremstyle{definition}

\theoremstyle{remark}

\numberwithin{equation}{section}

\title{\bf The orthogonal connectedness of polyhedral surfaces}
\author{Julia Q. Du$^{1, 2, 3, 4}$, Xuemei He$^{1}$,  Xiaotian Song$^{1}$,\\  Daniela Stiller$^{5}$, Liping Yuan\footnote{Corresponding author. Email: lpyuan@hebtu.edu.cn} $^{1, 2, 3, 4}$\ \
and Tudor Zamfirescu$^{1, 3, 6, 7}$
}

\date{}

\begin{document}

\vskip -8cm \maketitle \thispagestyle{plain}

{\small 1. School of Mathematical Sciences,
Hebei Normal University,
050024 Shijiazhuang, P.R. China.

2. Hebei Key Laboratory of Computational Mathematics and Applications, 050024 Shijiazhuang,  P.R. China.

3. Hebei International Joint Research Center for Mathematics and Interdisciplinary Science,
050024 Shijiazhuang, P.R. China.

4. Hebei Research Center of the Basic Discipline Pure Mathematics,
050024 Shijiazhuang, P.R. China.

5. Integrierte Gesamtschule, Friedrich Wilhelm Raiffeisen, Hamm/Sieg, Germany

6. Fachbereich Mathematik, Technische Universit\"at Dortmund,
44221 Dortmund, Germany

7. Roumanian Academy,  014700 Bucharest, Roumania
}
\begin{abstract}
Using the orthogonal connectedness, we introduce the notion of
orthogonal decomposability of convex polytopes and
study it in the case of Platonic and
Archimedean solids.
While doing so, we also encounter polytopes
which are not orthogonally decomposable.

\textbf{Keywords}: Orthogonal connectedness; orthogonal decomposability;
polyhedral surfaces; Platonic solids; Archimedean solids.

\textbf{Mathematics Subject Classification}: 52B10; 53A05; 54B15.
\end{abstract}

\section{Introduction}

In some application areas of computational geometry, for example concerning very large-scale integrated (VLSI) circuits, and
 digital image processing, the  lines in the Euclidean plane parallel to
 the $x$- or $y$-axis are of major interest.  This is so, because orthogonal polygons, defined as connected
 unions of finitely many planar boxes whose edges are parallel to the coordinate axes, are
often used as building blocks for VLSI layout and wire routing \cite{UAR02},
and also used in image
 processing \cite{SCS16} to describe images on rectangular lattices.

In 1984, Ottmann,  Soisalon-Soininen and  Wood \cite{ottm} introduced  orthogonal paths and the orthogonal connectedness.
A polygonal path $P\subset \mathds{R}^{3} $ is \emph{orthogonal}, if every edge of $P$ is parallel to one of the coordinate axes.
Let $S\subset \mathds{R}^{3}$ be a set. If, for any two distinct points $p,q\in S$, there is an orthogonal path $P$ joining $p$ and $q$ such that $ P\subset S$, then we say that  $S$ is \emph{orthogonally connected}.

Decomposition of polytopes into solids is a
lively topic in computational geometry.
The main purpose behind decomposition operations is to simplify
a problem for complex objects into a number of subproblems dealing with simple objects.
Since convex shapes are
considered useful for representation, manipulation and
rendering, a decomposition into convex solids is sought,
for example, convex decompositions of polytopes \cite{BD92,ZTS02,CZ09},
the triangulation of polytopes \cite{G60,Y05,Y10,IZ04,IZ07}.

There are some polyhedral surfaces in $ \mathds{R}^{3}$ which are orthogonally connected, such as
the boundary of a parallelotope with  all edges parallel to the coordinate axes, or the boundary of the connected union of
a \mbox{finite} family of such parallelotopes.
However, not all polyhedral surfaces in $\mathds{R}^{3}$
are orthogonally connected.
For example, the boundary of the regular octahedron is
not orthogonally connected, for any position in $\mathds{R}^3$.
In this paper, we discuss the orthogonal connectedness of
polyhedral surfaces in $\mathds{R}^{3}$ and
give partitions of polytopes into polytopes whose boundaries are orthogonally connected for Platonic and
Archimedean solids.

Our goal is to deepen the general knowledge about orthogonal connectedness. The rest of this paper is organized as follows.
We introduce some definitions
and notation in Section \ref{Sect-def}.
In Section \ref{Sect-ocp}, we investigate polytopes
whose boundaries are orthogonally connected.
Section \ref{Sect-odps} introduces the
orthogonal decomposability and presents
decompositions of the regular octahedron
and of the regular tetrahedron into polytopes
whose boundaries are orthogonally connected.
In Section \ref{Sect-odas},
we study the orthogonal decomposability of  Archimedean solids.
Polytopes which are not orthogonally decomposable
are investigated in Section \ref{Sect-ond}.

\section{Definitions and notation}\label{Sect-def}

For a  set  $M\subset \mathds{R}^{3}$, we denote by $\mathrm{  conv}M$ its convex hull,  by $\overline{M}$ its affine hull, and  by  $\mathrm {cl}M, \mathrm { bd}M $ its closure and relative boundary, which means in the topology of $\overline{M}$.

Put $x_{1}x_{2}\ldots x_{n}=\mathrm {conv}\{x_{1},x_{2},\ldots ,x_{n}\}$, for $x_{1},\ldots ,x_{n}\in \mathds{R}^{3}$.
Thus,   $xy$  denotes the line-segment from $x$ to $y$, and
$\overline{xy}$ the line through $x$ and $y$.
Let $[x_{1},x_{2},\ldots, x_{n}]$ be the union of line-segments $x_ix_{i+1}$ $( i=1,2\ldots, n-1)$.

Let $x\in \mathds{R}^{3}$ be a point, $M$ a subset of $\mathds{R}^{3}$, and $L$ an affine subspace of $\mathds{R}^{3}$. Affine subspaces are always supposed to have dimension  at least $1$.
Denote by $\pi_{L}(x)$ and $\pi_{L}(M) $ the orthogonal projections of $x$ and $M$ onto $L$.

 Let $M_{1},M_{2}\subset \mathds{R}^{3}$.
If   $\overline{M_{1}}$  and  $\overline{M_{2}}$ are orthogonal, we say that  $M_{1}$ is \emph{orthogonal} to $M_{2}$, using the notation  $M_{1}\perp M_{2}$. If $\overline{M_{1}}$  and  $\overline{M_{2}}$ are parallel, we say that  $M_{1}$ is \emph{parallel} to $M_{2}$, and write  $M_{1}\parallel  M_{2}$.

For $ x_{1},x_{2},\ldots, x_{n}\in \mathds{R}^3$, the set $x_{1}x_{2}\ldots x_{n} \subset \mathds{R}^3$ is called a \emph{polytope}; if $n$ is  minimal,
then $x_{1},x_{2},\ldots, x_{n}$ are its \emph{vertices}.
Like for every convex body,
each boundary point $x$ of a polytope $P$
belongs to some supporting hyperplane $H_x$.
If $\mathrm{dim}(P\cap H_x)=2$, $P\cap H_x$ is called a \emph{face}. If $\mathrm{dim}(P\cap H_x)=1 $, it is called an \emph{edge}. If every vertex belongs to exactly $3$ faces, then $P$ is said to be \emph{simple}.

Now, every face and every  edge of the polytope $P\subset \mathds{R}^3$ gets a mark $x~($or $y, $ or $z)$, if it is parallel to the $x$-axis (or $y$-axis,~or $z$-axis, respectively).
So, every edge has a set of 0 or 1 marks
and every face has a set of 0 or 1 or 2 marks.
Let $m(F)$ be the set of marks of the face or edge $F$.

\section{The orthogonal connectedness of polyhedral surfaces}\label{Sect-ocp}

Let $P\subset \mathds{R}^3$ be a polytope,
and $\mathcal{G}_P$ be the graph the vertices of which are the faces of $P$, two vertices of $\mathcal{G}_P$ being adjacent, if the corresponding faces have a common edge whose set of marks is different from those of the two faces.

If every face of $P$ gets a non-empty set of marks, and at least one of these sets has cardinality 2, we say that $P$ is $rich$.

\begin{thm}\label{Pifforconn}
If the boundary of the simple  polytope  $P\subset \mathds{R}^{3}$ is orthogonally
connected, then $P$ is rich and $\mathcal{G}_{P}$ is connected.
\end{thm}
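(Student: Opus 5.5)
We prove the three parts of the conclusion in turn: every face has a mark, some face has two marks, and $\mathcal{G}_P$ is connected. Throughout we use one observation. Let $F$ be a face and $x$ a point in its relative interior. Any orthogonal path in $\mathrm{bd}P$ starting at $x$ begins with a segment lying in $F$, since near $x$ the surface $\mathrm{bd}P$ coincides with $F$. That segment is parallel to a coordinate axis, so the axis is parallel to $F$.

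\emph{Every face has a mark.} Take $x$ in the relative interior of $F$ and any $q\in \mathrm{bd}P\setminus F$. An orthogonal path from $x$ to $q$ exists by hypothesis, and its first segment is nondegenerate and lies in $F$. By the observation, $F$ is parallel to some coordinate axis, so $m(F)\neq\emptyset$.

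\emph{Some face has two marks.} Suppose every face has exactly one mark. We claim every orthogonal path in $\mathrm{bd}P$ then moves in only one coordinate direction, which is absurd because $\mathrm{bd}P$ is not contained in a line.

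To see this, consider a maximal segment $s$ of the path, parallel to axis $a$. If $s$ meets the relative interior of a face $F$, then $a\in m(F)$. If $s$ runs inside an edge $e$, then $a\in m(e)$ and $a$ lies in the marks of both faces at $e$. At a turning point the path switches from direction $a$ to direction $b\neq a$. The turning point lies on some face, edge or vertex $G$ containing both segments locally. If $G$ is a face, it carries marks $a$ and $b$, a contradiction. If $G$ is an edge, the two segments lie in the two faces at that edge, or along it. If $G$ is a vertex, simplicity gives exactly three faces there, each with a single mark, and the segments lie in these faces or along their edges.

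Following directions face by face, each direction used lies in the unique mark of the face containing it. The path can only turn from $a$ to $b$ at a point where a face marked $a$ meets a face marked $b$, or moves along edges between them. A segment in direction $b$ starting at such a point must enter a face parallel to $b$, or an edge parallel to $b$.

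The delicate part is to show that all the directions are in fact forced to coincide. The argument I plan to use is this. Since each face $F$ has exactly one mark $a_F$, the intersection $F\cap \ell$ with a line $\ell$ parallel to $a_F$ is a chord of $F$. The whole path, projected along, stays in the union of such chords. Next, project $\mathrm{bd}P$ to the coordinate plane perpendicular to a direction. I expect the cleaner route is different, though: show that an orthogonal path from a relative interior point of a face can reach only points whose coordinates in the non-mark directions vary monotonically in a controlled way. This should give the conclusion that from a generic point one cannot reach a vertex with all coordinates different. Making this rigorous is where simplicity is used, and I expect it to be the main obstacle. The role of simplicity is to guarantee that at a vertex only three faces meet, so the local geometry of turning is restricted to the three pairwise edges.

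\emph{$\mathcal{G}_P$ is connected.} Take faces $F,F'$ and points $x,x'$ in their relative interiors, and join them by an orthogonal path. The path passes through a sequence of faces $F=F_0,F_1,\dots,F_k=F'$, with consecutive faces sharing an edge or a vertex. If the path crosses transversally from $F_i$ to $F_{i+1}$ through an interior point of their common edge $e$, then the crossing direction lies in both $m(F_i)$ and $m(F_{i+1})$ but not in $m(e)$, since it is transversal to $e$. Hence $m(e)\neq m(F_i)$ and $m(e)\neq m(F_{i+1})$, so $F_i$ and $F_{i+1}$ are adjacent in $\mathcal{G}_P$.

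Passages through a vertex, or runs along an edge, are handled by perturbation. Using simplicity, we reroute the path near the vertex or edge through the three incident faces with transversal crossings, or argue that these faces are already adjacent. This yields a walk in $\mathcal{G}_P$ from $F$ to $F'$.
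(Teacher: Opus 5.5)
Your argument that every face carries a mark is fine. The proof that some face carries two marks, however, rests on a false claim. You want to show that if every face has exactly one mark, then every orthogonal path in $\mathrm{bd}P$ uses a single coordinate direction. This fails. Two adjacent faces with mark sets $\{x\}$ and $\{y\}$ have an unmarked common edge. A path running parallel to the $x$-axis in the first face can hit that edge at a relative interior point and continue parallel to the $y$-axis in the second. This already happens in the non-rich tetrahedron of Figure~\ref{santu}, with faces $v_1v_3v_4$ and $v_1v_2v_3$. So the directions are not forced to coincide, and your fallback ideas (monotone coordinates, unreachable vertices) are not arguments.

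The missing idea is the counting/measure argument the paper uses. Suppose every face has a single mark, and consider a maximal axis-parallel segment $\ell\cap P\subset\mathrm{bd}P$. At each of its relative interior points the only available direction is that of $\ell$. Indeed, such a segment either runs through the relative interior of one face, or is an edge $e$ whose two faces both have mark set $m(e)$. Hence a path can change lines only at endpoints of such maximal segments, and at most three of them pass through any point. The set of points reachable from a fixed point therefore lies in a countable union of segments. It has zero area and cannot be all of $\mathrm{bd}P$. Simplicity plays no role here.

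For the connectedness of $\mathcal{G}_P$, your transversal-crossing idea is the right one. It is in fact more careful than the paper's case analysis on $\mathrm{card}(m(F_i))$, which asserts $m(E_i)=\emptyset$ for two one-mark faces even when their marks coincide.

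Your wording needs fixing, though: a direction lying in both $m(F_i)$ and $m(F_{i+1})$ is parallel to $e$. What you need is that the incoming direction lies in $m(F_i)\setminus m(e)$ and the outgoing one in $m(F_{i+1})\setminus m(e)$.

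The perturbation step is not justified, since orthogonal paths are rigid (for example in one-mark faces). It is also unnecessary:
\begin{itemize}
\item \textbf{Vertex passages.} Suppose the path arrives at a vertex $v$ from the relative interior of $F$ and leaves into that of $F'$. The arrival direction is parallel to no edge of $F$ at $v$, so it lies in $m(F)\setminus m(F\cap F')$, where $F\cap F'$ is an edge by simplicity. The same holds for $F'$, so $F$ and $F'$ are adjacent.
\item \textbf{Edge runs.} Two distinct edges traversed consecutively meet at a vertex and span a face with two marks. Two-mark faces sharing a one-mark edge are adjacent.
\item \textbf{Entries and exits.} Entering or leaving such a run needs the same kind of local check as at vertices.
\end{itemize}
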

\begin{proof}
Let $ F, F'$ be  faces of $P$.
Since $\mathrm{bd}P$ is orthogonally connected,  any face of $P$ has a non-empty mark set.
For any two points in  $\mathrm{bd}P$, one in $F$ and the other in $F'$, there exists
an orthogonal path $L$ joining them in $\mathrm{bd}P$. Let $\{F_1,\ldots, F_k\}$ be the
sequence of  faces  met by $L$ one by one, where $F_1=F,$ and $ F_k=F'$.
 Since $P$ is simple, $F_i,F_{i+1}$ have a common edge $E_{i}$ $(i=1,\ldots, k-1)$.
 If both $m(F_i)$ and $m(F_{i+1})$ have one mark each, then $m(E_{i})$  is empty,  which
 implies that $F_i,F_{i+1} $  are adjacent in $\mathcal{G}_{P} $.
 If both $m(F_i)$ and $m(F_{i+1})$ have mark sets with cardinality $2$, then  $m(E_{i})$ is
 different from them  and $F_i,F_{i+1} $  are adjacent in $\mathcal{G}_{P} $.

 Now, suppose $\mathrm{card}(m(F_i))=1, \mathrm{card}(m(F_{i+1}))=2 $.
Then,  $m(F_i)\not\subset m(F_{i+1})$ and  $m(E_{i})=\emptyset$.  Hence, $F_i,F_{i+1} $
are adjacent in $\mathcal{G}_{P} $.

Consequently, $ F_1,\ldots, F_k$ determine a path in $\mathcal{G}_{P} $ from  $F_1$ to
$F_k$,  and $\mathcal{G}_{P} $ is connected.

Suppose now that $P$ is not rich. Take arbitrarily a point $x$ on a face $F$ of $P$ and
start an orthogonal path there. It reaches an edge of $F$ at $x_1$, say. If $x_1$ is not a
vertex of $P$, the path continues on a face adjacent to $F$ until it reaches again an
edge.   If $x_1$  is a vertex of $P$, there are two possible continuations, until next
edges are met.
So, for the continuation of the path, one or two line-segments are added to the initial
line-segment in $F$. The path continues this way using  countably many line-segments.
It will stop only if it comes back to $x$.
In order for $\mathrm{bd}P$ to be orthogonally connected, the path must cover
$\mathrm{bd}P$. However, the path has (2-dimensional) measure $0$, and a contradiction is
obtained.
\end{proof}

\begin{figure}[htbp]
\centering
\includegraphics[width=1.5in]{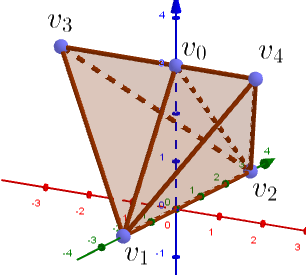}\ \ \ 
\caption{}
\label{santu}
\end{figure}

In the statement of Theorem \ref{Pifforconn} the orthogonal connectedness is essential.
Indeed, consider the  tetrahedron $P$  positioned  as in Figure \ref{santu}. It is simple and $\mathcal{G}_{P}$ is connected.  However, $P$ is not rich.  The boundary of $P$ is not orthogonally connected.

\begin{figure}[htbp]
\begin{center}
  \includegraphics[width=2in]{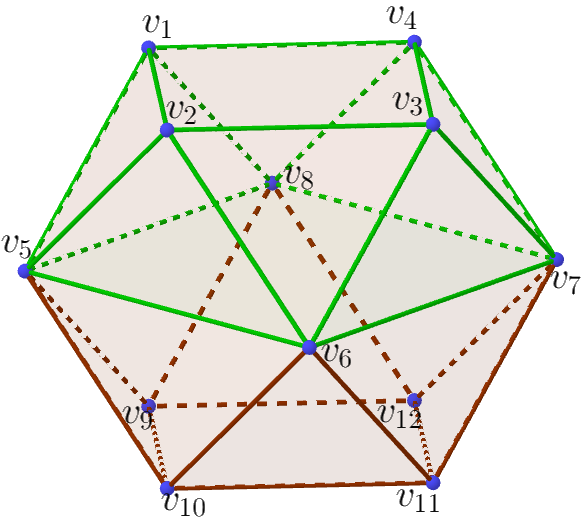}\ \
  \includegraphics[width=4in]{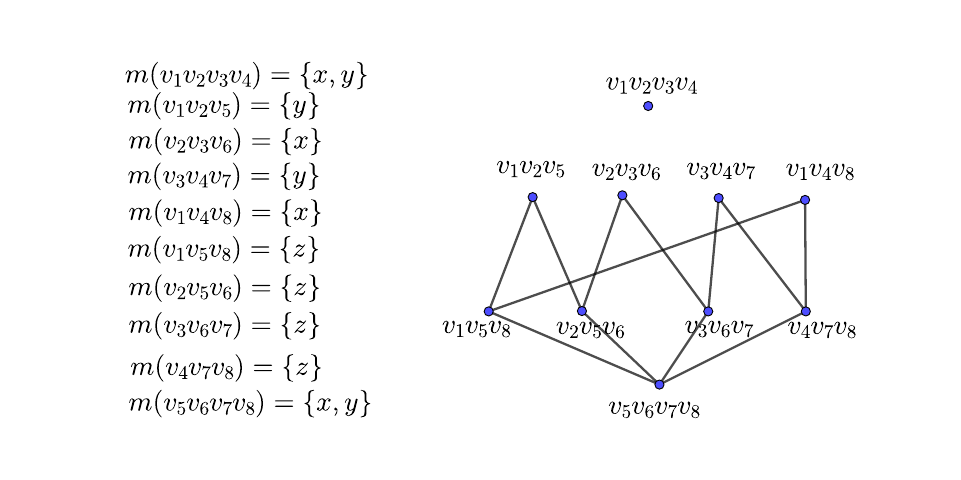}
  \end{center}
  \hspace{3cm} $(a)$ \hspace{8cm} $(b)$
   \caption{}
  \label{Gp1}
\end{figure}
The condition that $P$ be simple is also essential. Indeed,
 the half $P=v_1v_2v_3v_4v_5v_6v_7v_8$ of the cuboctahedron in Figure \ref{Gp1} $(a)$
 is rich and  $\mathrm{bd}P$ is orthogonally connected.  But  $\mathcal{G}_{P}$ is not connected (see Figure \ref{Gp1} $(b)$).  The polytope  $P$ is not simple.

\begin{figure}[htbp]
 \begin{center}
  \includegraphics[width=2in]{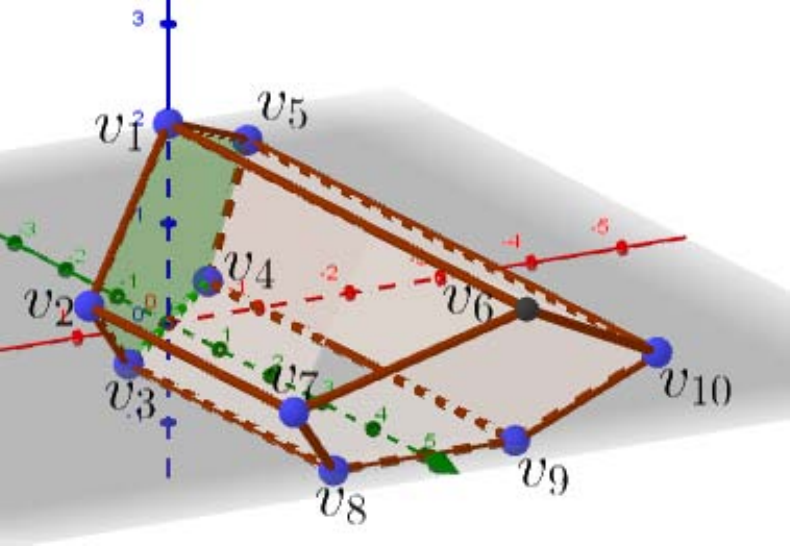} \includegraphics[width=4.3in]{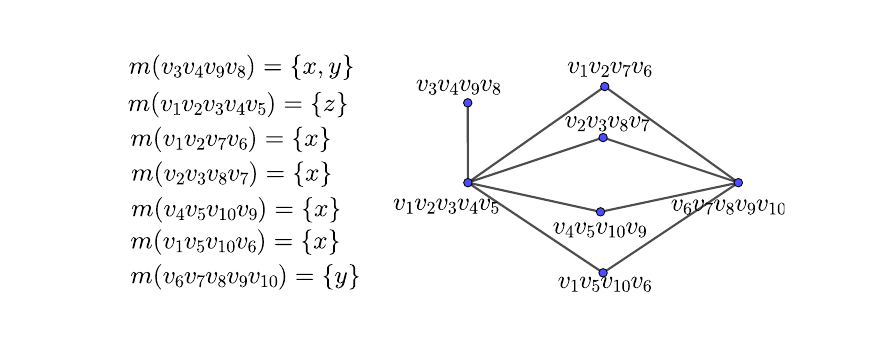}
  \end{center}
  \hspace{3cm} $(a)$ \hspace{7cm} $(b)$
   \caption{}
  \label{Gp2}
\end{figure}

Notice that the converse implication doesn't hold. Indeed, consider the points  $ v_1=(0,0,2),v_2= ( 2,2,1),v_3=(1,1,0),v_4=(-1,-1,0),v_5=(-2,-2,1),v_{6}=(0,7,2) ,v_7=(2, 6, 1), v_8=(1, 5, 0),v_9=(-1, 5, 0),v_{10}=(-2, 6, 1)$  (see Figure \ref{Gp2} $(a)$).
 The   simple  heptahedron  $P=v_1v_2v_3v_4v_5v_6v_7v_8v_9v_{10}$  is    rich and  $\mathcal{G}_{P}$ is  connected (see Figure \ref{Gp2} $(b)$). But   $\mathrm{bd}P$ is not orthogonally connected, because the orthogonal paths starting in $v_2$ only reach the points of $[v_2,v_7,v_{10}, v_5]$.

\section{Orthogonal decomposability of Platonic solids}\label{Sect-odps}

A polytope in $ \mathds{R}^3$ will be called \emph{orthogonally decomposable}, if it can
be decomposed into finitely many polytopes with orthogonally connected boundaries.

The boundary of the cube is obviously orthogonally connected. The following results show that two further Platonic  solids, which lack this property, are orthogonally decomposable.

\begin{thm}\label{regoctahedron}
The regular octahedron in its standard position can be decomposed into four tetrahedra with orthogonally connected
boundaries.
\end{thm}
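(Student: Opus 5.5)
The plan is to exhibit four explicit tetrahedra $T_1,\dots,T_4$ that tile the octahedron, and to check for each one that its boundary is orthogonally connected. Theorem \ref{Pifforconn} serves as a necessary screen, since every tetrahedron is simple. So each $T_i$ must be rich: every face of every $T_i$ needs a non-empty mark set, at least one face needs two marks, and $\mathcal{G}_{T_i}$ must be connected.

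First I fix the position. Before committing to a decomposition, I will check whether the octahedron $\mathrm{conv}\{\pm e_1,\pm e_2,\pm e_3\}$ can work at all. Its faces lie in the planes $\pm x\pm y\pm z=1$, none of which contains an axis direction, so every one of these faces has empty mark set. Any tiling piece meeting such a face has a $2$-dimensional face inside it, and that face is then unmarked. No piece of that kind has an orthogonally connected boundary, by the first line of the proof of Theorem \ref{Pifforconn}. So the theorem cannot refer to this coordinate placement. I read ``standard position'' as the placement with vertices $A=(1,1,0)$, $B=(-1,1,0)$, $C=(-1,-1,0)$, $D=(1,-1,0)$, $N=(0,0,\sqrt2)$, $S=(0,0,-\sqrt2)$. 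There the equatorial square has sides parallel to the $x$- and $y$-axes and the axis $NS$ is parallel to the $z$-axis. This is the natural reading under which every face of the octahedron carries the mark $x$ or $y$.

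Second, I search systematically for the four tetrahedra. The obvious tilings fail the richness screen. The family $NSAB,\dots$ has all faces single-marked. The family $NABC,NACD,SABC,SACD$ contains the face $NAC$, which is unmarked. So the decomposition will have to use interior points, for instance the centre $O$ or midpoints of edges. It will also have to place an interior face in a coordinate plane, such as $x=0$ or $z=0$, so that each piece acquires a doubly-marked face. Every face of each piece must still contain an axis direction. I will enumerate candidate cutting planes through $NS$ and through the equatorial edges, and keep only tetrahedra all of whose faces are marked.

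Once the four candidates pass the screen of Theorem \ref{Pifforconn}, I prove orthogonal connectedness of each boundary directly. I take a point of the doubly-marked face as a hub, since that face is orthogonally connected in itself. From any point on a singly-marked face I move along its mark direction until I reach an edge shared with the hub face or with a face adjacent to it. The main obstacle is the second step, finding four tetrahedra that are all rich. The examples following Theorem \ref{Pifforconn} show that richness is not sufficient, so each candidate also has to survive this explicit path check.
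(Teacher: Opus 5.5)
\textbf{There is a genuine gap: the proposal never exhibits four tetrahedra, and it discards the correct decomposition because of a miscalculation.} Your position matches the paper's standard position, and using Theorem~\ref{Pifforconn} as a necessary screen is fine. The error is the claim that $NAC$ is unmarked. The points $N$, $S$, $A$, $C$ all lie in the plane $x=y$: the midpoint of $AC$ is the centre $\mathbf{0}$, and the segment from $N$ to $\mathbf{0}$ is vertical. So $m(NAC)=\{z\}$.

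Consequently the family $NABC$, $NACD$, $SABC$, $SACD$, obtained by cutting with $z=0$ and with the vertical plane through the diagonal $AC$, is rich. In $NABC$ the mark sets are $m(NAB)=\{x\}$, $m(NBC)=\{y\}$, $m(NAC)=\{z\}$ and $m(ABC)=\{x,y\}$. This is exactly the paper's decomposition, and your hub argument finishes it:
\begin{itemize}
\item from $p\in NAB$, move parallel to $AB$ to a point of $NA$, then vertically inside $NAC$ down to $AC$;
\item from $p\in NBC$, move parallel to $BC$ to $NC$, then down to $AC$;
\item from $p\in NAC$, move straight down to $AC$.
\end{itemize}
The right isosceles triangle $ABC$, with legs parallel to the $x$- and $y$-axes, is orthogonally connected, and the other three pieces follow by symmetry.

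\textbf{The search you propose instead cannot succeed.} A decomposition of the octahedron into exactly four tetrahedra admits no interior or other extra vertices. The argument runs as follows.
\begin{itemize}
\item Each of the eight faces of the octahedron must be covered by faces of pieces lying in its plane.
\item A tetrahedron inside the octahedron has at most two faces in such planes. Three such faces would meet at a point lying on three face planes, i.e.\ at a vertex such as $N$. Any three of the four face planes at $N$ contain a non-adjacent pair, such as those of $NAB$ and $NCD$. These meet in the horizontal line through $N$ parallel to $AB$, which touches the octahedron only at $N$, so the shared tetrahedron edge would be degenerate.
\item Hence four pieces cover the eight faces only if each face of the octahedron is itself a face of exactly one piece, and each piece has two of them. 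These two share an edge, so each piece is the convex hull of two adjacent faces of the octahedron.
\end{itemize}
The only four-piece decompositions are therefore the three ``around a diagonal''. In standard position the one around $NS$ fails, as you correctly observed, while those around $AC$ and $BD$ work. Adding the centre or edge midpoints as vertices would come up empty, so as written the proposal is a search aimed in the wrong direction rather than a proof.
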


\begin{proof}
Let $ P=v_1v_2v_3v_4v_5v_6\subset \mathds{R}^{3}$  be the regular octahedron, such that $
v_1v_5$ is parallel to the $x$-axis and $v_1v_6$ parallel to the $y$-axis (see Figure \ref{z8}).
 $P_1=v_1v_5v_6v_2, P_2=v_3v_5v_6v_2, P_3=v_1v_5v_6v_4, P_4=v_3v_5v_6v_4$ are four
 symmetric  tetrahedra.

Every point of $\mathrm{bd}P_1$ can be joined in  $\mathrm{bd}P_1$ with a point of $
v_1v_5v_6$ through an orthogonal path. For example, starting in  $x\in v_1v_2v_5$  we have the path $[x,x',x'']$ with $x'\in v_2v_5,  x''\in
v_5v_6, xx'\parallel v_1v_5,$ and $x'x''\parallel v_2v_4$.  Similarly,  from $y\in \mathrm{bd}P_1$,  we can reach a point $y''\in
 v_5v_6$. Now, the set of marks of $v_1v_5v_6$ is $\{x,y\}$. Then, there is an orthogonal
 path in $v_1v_5v_6 $ from $x''$ to $y''$, and thus an orthogonal path from $x$ to $y$ is completed.
 Symmetrically,  all $\mathrm{bd}P_i$ ($i=2,3,4$) are orthogonally connected.
\end{proof}

\begin{figure}[htbp]
\parbox[b]{0.35\textwidth}
  {\centering
  \subfigure
  {\includegraphics[width=2in]{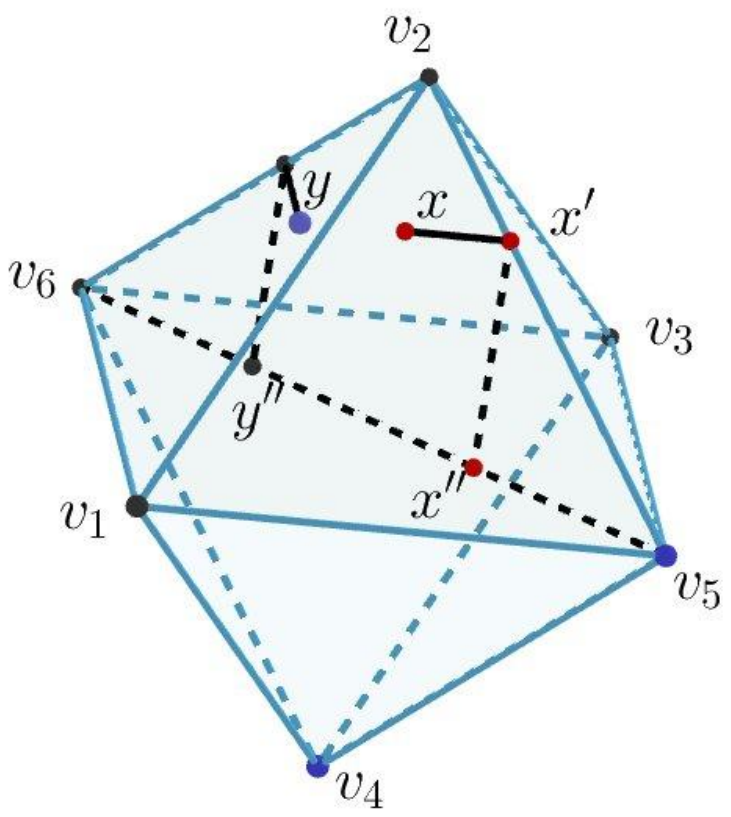}}\\
  \caption{ }
  \label{z8}}
  \parbox[b]{0.65\textwidth}
   {\centering
   \subfigure
  {\includegraphics[width=2in]{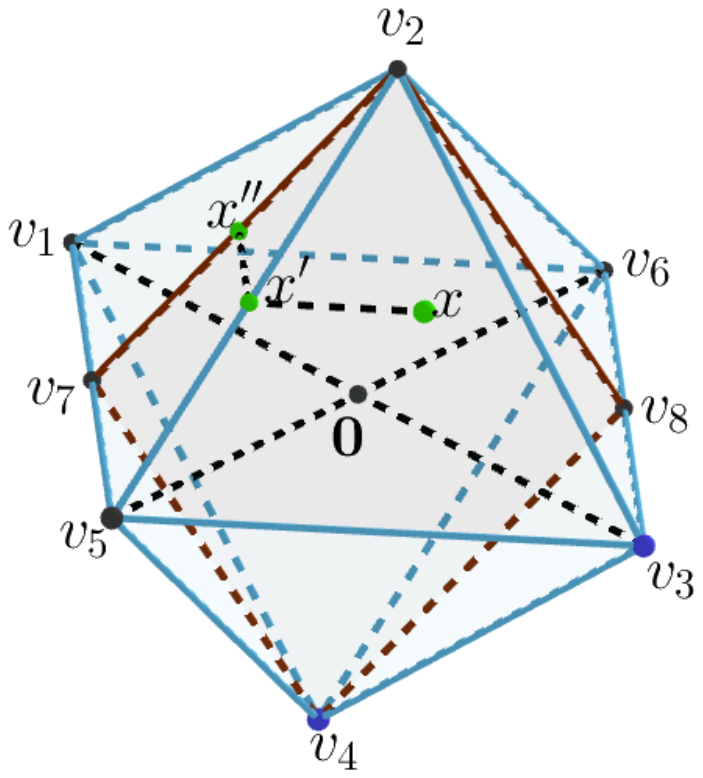}}\\
   \caption{}
  \label{figre8}}
\end{figure}

This is not the only way to suitably decompose the regular octahedron. We can decrease the
number of pieces to two.

\begin{thm}\label{thmreoc}
The regular octahedron in its standard position can be decomposed into two heptahedra with orthogonally connected boundaries.
\end{thm}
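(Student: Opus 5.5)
We take the two heptahedra to be the two halves of the octahedron cut by a plane that contains the axis $v_2v_4$ and is parallel to an edge of the equatorial square. Concretely, as in the proof of Theorem \ref{regoctahedron}, the equatorial square is $v_1v_5v_3v_6$, with $v_1v_5$ parallel to the $x$-axis, $v_1v_6$ parallel to the $y$-axis, and apexes $v_2,v_4$ on a vertical line. Coordinates:
\[ v_1=(0,0,0),\quad v_5=(1,0,0),\quad v_3=(1,1,0),\quad v_6=(0,1,0),\quad v_{2,4}=\left(\tfrac12,\tfrac12,\pm\tfrac{1}{\sqrt2}\right). \]
Let $m_1,m_2$ be the midpoints of $v_1v_6$ and $v_5v_3$, and cut by the plane $y=\tfrac12$. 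This plane contains $v_2,v_4,m_1,m_2$. The two pieces are
\[ Q_1=v_1v_5m_2m_1v_2v_4 \quad\text{and}\quad Q_2=v_6v_3m_2m_1v_2v_4, \]
and they are congruent by the reflection in $y=\tfrac12$.

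\emph{Why this choice.} Any decomposition into two convex polytopes is a cut by a plane. The cut face must carry a mark, since a face with empty mark set contains no axis-parallel segment through its relative interior points. For each piece to have $7$ faces, the plane must split six of the eight faces and leave one whole face on each side. The vertical plane $y=\tfrac12$ does exactly this.

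\emph{Step 1: faces and marks of $Q_1$.} The seven faces of $Q_1$ are:
\begin{itemize}
\item the whole triangles $v_1v_5v_2$ and $v_1v_5v_4$, each with mark $x$ (they contain $v_1v_5$);
\item the half-triangles $v_1m_1v_2$ and $v_1m_1v_4$, each with mark $y$;
\item the half-triangles $v_5m_2v_2$ and $v_5m_2v_4$, each with mark $y$;
\item the cut quadrilateral $m_1v_2m_2v_4$, with marks $\{x,z\}$.
\end{itemize}
Note that $v_1v_5m_2m_1$ is the equatorial section, not a face of $Q_1$. So every face has a nonempty mark set, and the cut face has two marks.

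\emph{Step 2: route every boundary point to the cut face.}
\begin{itemize}
\item For $p$ in $v_1v_5v_2$: move parallel to the $x$-axis until meeting $v_1v_2$ or $v_5v_2$. Then continue in $v_1m_1v_2$ (or $v_5m_2v_2$) parallel to the $y$-axis. In $v_1m_1v_2$, a $y$-parallel segment starting on $v_1v_2$ ends on $m_1v_2$, because $v_2$ and $m_1$ both have $y=\tfrac12$. So the path reaches the edge $m_1v_2$, which lies in the cut face.
\item For $p$ in a half-triangle such as $v_1m_1v_2$: move parallel to the $y$-axis directly to $m_1v_2$.
\item The faces adjacent to $v_4$ are handled in the same way by the symmetry $z\mapsto -z$.
\end{itemize}

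\emph{Step 3: connect inside the cut face.} The cut face $m_1v_2m_2v_4$ is a convex quadrilateral (a rhombus) in a plane carrying marks $\{x,z\}$. Any two of its points are joined by an orthogonal path inside it, for instance a $z$-segment followed by an $x$-segment, adjusted near the corners as in the argument for $v_1v_5v_6$ in Theorem \ref{regoctahedron}. Concatenating the paths gives an orthogonal path in $\mathrm{bd}Q_1$ between any two boundary points, so $\mathrm{bd}Q_1$ is orthogonally connected. By the symmetry, so is $\mathrm{bd}Q_2$.

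\emph{Main obstacle.} The delicate part is Step 2, specifically checking that each directional move actually stays inside the current face and lands on the intended edge rather than escaping. This depends on the slopes of the triangle edges. I would verify it face by face using the explicit coordinates. The key fact is that $v_2$, $v_4$, $m_1$ and $m_2$ all have $y=\tfrac12$, so the $y$-moves in the half-triangles always end on the cut face. I would also handle boundary points lying on edges and at vertices separately; they are easy, since each such point lies in some face and the same moves apply.
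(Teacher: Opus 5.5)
Your proposal is correct and is essentially the paper's own proof. It uses the same cut by the plane through the two apexes and the midpoints of two opposite equatorial edges, the same mark sets on the seven faces, and the same routing: an $x$-move on a whole triangle to a slanted edge, then a $y$-move across a half-triangle onto the cut face. It finishes, as the paper does, by connecting points inside the cut face using its marks $\{x,z\}$. For that last step there is a cleaner justification than ``adjusted near the corners'': the rhombus $m_1v_2m_2v_4$ has axis-parallel diagonals, so any point can move parallel to the $z$-axis to the diagonal $m_1m_2$, travel along it, and move parallel to the $z$-axis again, whereas a single $z$-then-$x$ L-path can leave the rhombus.
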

\begin{proof}

Let $v_1v_2v_3v_4v_5v_6$ be the regular octahedron, with $\mathbf{0}$ as the centre of $v_1v_6v_3v_5$ and $v_1=(1,-1,0)$;
see Figure~\ref{figre8}.

Denote $v_2v_7v_4v_8$ by $F_0$, $v_2v_5v_7$ by $F_1$, $v_4v_5v_7$ by $F_2$, $v_2v_3v_5$ by $F_3$, $v_3v_4v_5$ by $F_4$, $v_2v_3v_8$ by $F_5$ and $v_3v_4v_8$ by $F_6$. Then,
$m(F_0)=\left\{x,z\right\}$, $m(F_1)=m(F_2)=\left\{y\right\}$, $m(F_3)=m(F_4)=\left\{x\right\}$ and $m(F_5)=m(F_6)=\left\{y\right\}$.

$P_1=v_3v_5v_2v_4v_7v_8$ and $P_2=v_1v_6v_2v_4v_7v_8$ are two heptahedra.

Each point  $x\in \mathrm{bd}P_1$ can be joined in $\mathrm{bd}P_1$ with a point  $y\in F_0$ through an orthogonal path. For example, from $x\in F_3$ to $x''\in v_2v_7$ we have the path $[x,x',x'']$ with $x'\in v_2v_5$, $xx'\parallel v_3v_5$, $x'x''\parallel v_5v_7$.

Since $m(F_0)=\left\{x,z\right\}$, there is an orthogonal path in $F_0$ from $x''$ to $y$, and so an orthogonal path from $x$ to $y$ is completed.

Symmetrically, both $\mathrm{bd}P_1$ and $\mathrm{bd}P_2$ are orthogonally connected.
\end{proof}

Two disjoint edges in a polyhedron are \emph{facing each
 other}, if the
smallest distance between points on their lines
is realized by points in those edges.

\begin{thm}\label{tetrah}
 After a suitable rotation, any  tetrahedron  with two perpendicular edges facing each
 other  can be decomposed into two tetrahedra with orthogonally connected boundaries.
\end{thm}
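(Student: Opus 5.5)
The plan is to place the tetrahedron so that the two perpendicular facing edges and their common perpendicular are parallel to the three coordinate axes, and then cut along the plane through one edge and the foot of the common perpendicular on the other edge. After that the argument copies the proof of Theorem \ref{regoctahedron}: one face of each piece carries two marks, and every point of the boundary is led into that face by a short orthogonal path.

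\textbf{Positioning and cut.} Let the tetrahedron be $abcd$, with $ab\perp cd$ and these two edges facing each other. By definition there are points $p\in ab$ and $q\in cd$ realizing the distance between the lines $\overline{ab}$ and $\overline{cd}$. Since the edges are disjoint, $p\neq q$, and $pq$ is perpendicular to both $ab$ and $cd$. So $ab$, $cd$, $pq$ are pairwise orthogonal, and after a rotation we may take $ab\parallel x$-axis, $cd\parallel y$-axis and $pq\parallel z$-axis. We may assume $q$ lies in the relative interior of $cd$; if $q$ is an endpoint, the same argument works with the degenerate piece discarded or with the roles of the edges exchanged. We cut $abcd$ by the plane through $a,b,q$, which gives the two tetrahedra $P_1=abqc$ and $P_2=abqd$.

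\textbf{Marks of $P_1$.} Its faces and their mark sets are:
\begin{itemize}
\item $abq$: it contains $ab$ and $pq$, so $m(abq)=\{x,z\}$;
\item $abc$: it contains $ab$, so $m(abc)\ni x$;
\item $aqc$ and $bqc$: each contains $qc\subset cd$, so each has mark $y$.
\end{itemize}
By symmetry, $P_2$ has the same pattern with $d$ in place of $c$.

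\textbf{Reaching the face $abq$.} Every point of $\mathrm{bd}P_1$ can be joined to a point of $abq$ by an orthogonal path in $\mathrm{bd}P_1$.
\begin{itemize}
\item A point $u\in aqc$: move parallel to $qc$ (direction $y$). This ray stays in the triangle $aqc$ and reaches the side $aq\subset abq$.
\item A point in $bqc$: the same move reaches $bq$.
\item A point $u\in abc$: move parallel to $ab$ (direction $x$) to a point $u'$ on $ac$ or $bc$. Then continue inside $aqc$ or $bqc$ parallel to $cq$ until $aq$ or $bq$ is reached. This gives the path $[u,u',u'']$, as in the proof of Theorem \ref{regoctahedron}.
\end{itemize}

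\textbf{The two-marked face is orthogonally connected.} This is the step I expect to need the most care, since the boundary points of $abq$ matter. The face $abq$ is a planar convex triangle whose plane contains the two independent axis directions $x$ and $z$. For two points $u,v$ in its relative interior, replace the segment $uv$ by a fine staircase with steps in directions $x$ and $z$; this staircase stays within any prescribed distance of $uv$, so it stays inside the triangle.

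For a point on a side of $abq$, first make a short move into the relative interior. For points of $aq$ or $bq$, move in direction $x$; this enters the triangle because $ab\parallel x$ lies on the appropriate side. For points of $ab$, move in direction $z$ towards $q$.

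Concatenating these paths makes $\mathrm{bd}P_1$ orthogonally connected, and by symmetry so is $\mathrm{bd}P_2$.
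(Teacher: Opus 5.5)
Your decomposition and strategy are the paper's. The paper also makes the two edges and their common perpendicular axis-parallel, cuts through the edge $v_1v_2$ and the foot $v_0$ of the common perpendicular on $v_3v_4$ (your $ab$ and $q$), and funnels every boundary point into the two-marked face. There is one step that fails as written, though, and it is not cosmetic.

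When you show that $abq$ is orthogonally connected, you leave points of $aq$ and $bq$ by a move in direction $x$. At the common endpoint $q$ this leaves the triangle at once, because $q$ is the only point of $abq$ at its $z$-level. This is not a negligible corner. Your own moves send every point of the edge $cq$, and $c$ itself, to $q$. These points have no other way out: $aqc$ and $bqc$ generically carry only the mark $y$, and an $x$-move from $c$ inside $abc$ is degenerate. The exit from $q$ has to be the segment $qp$ in direction $z$, which lies in $abq$ precisely because $p\in ab$. That is the only place where the facing hypothesis on the edge $ab$ is needed, and your write-up never uses it, since the mark set $\{x,z\}$ of $abq$ only requires $p\in\overline{ab}$. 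For example, take $a=(0,0,0)$, $b=(1,0,0)$, $c=(2,-1,1)$, $d=(2,1,1)$. Then $ab\perp cd$ and $q=(2,0,1)\in cd$, but $p=(2,0,0)\notin ab$. Every other step of your argument still applies, yet in $\mathrm{bd}\,abqc$ the point $q$ reaches only the segment $qc$.

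Add the move from $q$ along $qp$. Also note that at $a$ and $b$ your $x$-move only slides along $ab$; the subsequent $z$-move from a relative interior point of $ab$ is what enters the triangle. With these, the main case is complete. In the paper, the normalization ${\bf 0}\in v_1v_2$ with $v_0$ on the $z$-axis is what puts the $z$-segment $v_0{\bf 0}$ into the face $v_1v_2v_0$, even though the paper only cites $m(v_1v_2v_0)=\{y,z\}$.

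A smaller point concerns $q\in\{c,d\}$. Discarding the degenerate piece leaves one tetrahedron rather than two. Exchanging the roles of the edges does not help when also $p\in\{a,b\}$. The paper does not treat this case either.
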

\begin{proof}

Let $P=v_1v_2v_3v_4\subset \mathds{R}^{3}$ be a tetrahedron with $v_1v_2\perp v_3v_4 $,
such that $v_1v_2$ lies in the $y$-axis, ${\bf 0}\in v_1v_2$,
$v_3v_4 $ is parallel to the $x$-axis and
the intersection  of $v_3v_4$ with the
$z$-axis is not empty. Let $v_0\in v_3v_4$ belong to the $z$-axis. See Figure \ref{santu}.
Put $ P_1=v_1v_2v_3v_0$ and $ P_2=v_1v_2v_4v_0$; then  $ P=P_1\cup P_2$.

Every point of $\mathrm{bd}P_1$ can be joined in  $\mathrm{bd}P_1$ with a point of $
v_1v_2v_0$ through an orthogonal path,  each edge of which  is parallel to $
v_1v_2$ or $v_3v_0$.
Since $ m(v_1v_2v_0) =\{y,z\}$,  any two points in $v_1v_2v_0$ are joined by  an
orthogonal path inside $ v_1v_2v_0 $.  Thus, $\mathrm{bd}P_1$ is orthogonally
connected.
For $ \mathrm{bd}P_2$, the proof is analogous.
\end{proof}
\begin{cor}\label{regtetrah}
After a suitable rotation, the regular  tetrahedron   can be decomposed into two
tetrahedra with orthogonally connected boundaries.
\end{cor}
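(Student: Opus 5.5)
This follows directly from Theorem \ref{tetrah}, once we check its hypothesis for the regular tetrahedron. So the whole plan is to show that a regular tetrahedron $P=v_1v_2v_3v_4$ has two disjoint edges that are perpendicular and face each other.

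Take the two opposite edges $v_1v_2$ and $v_3v_4$. For perpendicularity, I would place $P$ in the standard way inside a cube, with $v_1=(1,1,1)$, $v_2=(-1,-1,1)$, $v_3=(1,-1,-1)$ and $v_4=(-1,1,-1)$. Then $v_1-v_2=(2,2,0)$ and $v_3-v_4=(2,-2,0)$, whose inner product is $0$, so $v_1v_2\perp v_3v_4$.

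For the facing condition, consider the midpoints $m=(0,0,1)$ of $v_1v_2$ and $m'=(0,0,-1)$ of $v_3v_4$. The vector $m-m'=(0,0,2)$ is orthogonal to both direction vectors. Hence $mm'$ is the common perpendicular of the lines $\overline{v_1v_2}$ and $\overline{v_3v_4}$. Since these lines are skew, the smallest distance between points on them is attained uniquely at $m$ and $m'$. Both points lie inside the edges, so the two edges face each other.

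With the hypothesis verified, Theorem \ref{tetrah} applies. After the rotation given there, $v_1v_2$ lies in the $y$-axis, $v_3v_4$ is parallel to the $x$-axis, and the $z$-axis meets $v_3v_4$ at $v_0=m'$. The decomposition is then $P_1=v_1v_2v_3v_0$ and $P_2=v_1v_2v_4v_0$, each with orthogonally connected boundary. There is no real obstacle here. The only point needing care is the facing condition, that is, that the common perpendicular meets both segments and not just their lines, and the midpoint computation settles this.
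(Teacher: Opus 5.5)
Your proposal is correct and follows the paper's route. The paper states the corollary as an immediate consequence of Theorem~\ref{tetrah} with no separate proof. Your coordinate check supplies the hypothesis the paper leaves implicit: opposite edges $v_1v_2$ and $v_3v_4$ are perpendicular, and their common perpendicular joins the midpoints, which lie inside the edges. This yields the decomposition $P_1=v_1v_2v_3v_0$, $P_2=v_1v_2v_4v_0$ with $v_0$ the midpoint of $v_3v_4$.
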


In the rest of the paper it is - often tacitly - understood that a suitable rotation is performed.

\section{Orthogonal decomposability of  Archimedean solids}
\label{Sect-odas}

In this section we find that four well-known  Archimedean solids are orthogonally decomposable.
The following lemma from \cite{orthhe} is useful.

\begin{lem}\label{orcylin}
If $M \subset \mathds{R}^{2}$ is orthogonally connected, then  any right cylinder in $\mathds{R}^3$ based on $M$,
as well as its  boundary, are orthogonally connected.
\end{lem}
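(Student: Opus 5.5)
The plan is to fix coordinates so that the cylinder axis is a coordinate axis. Say $M$ lies in the $xy$-plane and the cylinder is $C=M\times I$, where $I=[a,b]\subset\mathds{R}$ is an interval in the $z$-direction (the cylinder being right means its generators are parallel to the $z$-axis). The key point is that orthogonal paths in $M$ lift to orthogonal paths at any fixed height $z=t$: if $[p_1,\dots,p_n]\subset M$ is orthogonal, then $[(p_1,t),\dots,(p_n,t)]\subset M\times\{t\}$ is orthogonal too, since its edges are parallel to the $x$- or $y$-axis. Segments along the generators are parallel to the $z$-axis, so they are admissible edges as well.

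For the solid cylinder, take $(p,s),(q,t)\in C$. First use an orthogonal path in $M$ from $p$ to $q$, lifted to height $s$, to reach $(q,s)$. Then go along the vertical segment from $(q,s)$ to $(q,t)$, which lies in $\{q\}\times I\subset C$. Degenerate cases, such as $p=q$ or $s=t$, just drop one piece.

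For the boundary, $\mathrm{bd}\,C$ is the union of the two bases $M\times\{a\}$ and $M\times\{b\}$ with the lateral surface $(\mathrm{bd} M)\times I$. Here $\mathrm{bd} M$ is the relative boundary in the plane, which for a planar body is its boundary curve. Each base is orthogonally connected, since it is a copy of $M$. Every point $(p,s)$ of the lateral surface is joined to the base $M\times\{a\}$ by the vertical segment $\{p\}\times[a,s]$, which stays in the lateral surface. The vertical segment $\{p\}\times I$ with $p\in\mathrm{bd}M$ also joins the two bases. So any two boundary points are connected by the following route: go vertically to the bottom base (or stay there if already on it), then use a lifted path inside the bottom base. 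If a point lies on the top base, first move within the top base to a point over $\mathrm{bd}M$, then descend along the lateral surface.

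The step I expect to be the main obstacle is this last one. It needs every point of the top base to be joinable, inside that base, to a point lying over $\mathrm{bd} M$. That holds because $M$ is orthogonally connected and contains boundary points (it is compact and nonempty), so a lifted orthogonal path from any point to a chosen boundary point does the job. In the case where $\mathrm{bd} M=M$, i.e.\ $M$ is itself one-dimensional, the "cylinder" and its boundary essentially coincide, and the argument for the solid cylinder applies directly.
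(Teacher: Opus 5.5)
Your proof is correct. The paper gives no proof of this lemma; it quotes it from earlier work, so there is no paper route to compare against. Your argument is the natural one: lift orthogonal paths of $M$ to horizontal slices $M\times\{t\}$, change height along generators, and for the boundary descend along the lateral surface $(\mathrm{bd}M)\times I$ to the bottom base.

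Three points should be made explicit rather than slipped in.

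\textbf{The coordinate setup is a reading of the statement, not a normalization.} Orthogonal connectedness is not invariant under rotations, so you cannot ``fix coordinates'' freely. What you are doing is reading the statement as saying that $M$ lies in a coordinate plane with its own axes and that the generators are parallel to the third axis. That reading is the right one.

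\textbf{You use compactness of $M$, which is not a hypothesis of the lemma.} You need it to get $\mathrm{cl}M\times\{a,b\}=M\times\{a,b\}$, $\mathrm{bd}M\subset M$ and $\mathrm{bd}M\neq\emptyset$. Without some such assumption the lemma is actually false. Take for $M$ the open triangle with vertices $(0,0),(2,1),(1,2)$. It is orthogonally connected by Lemma~\ref{co-oc}. Yet $\mathrm{bd}(M\times[0,1])$ contains the vertical edge over $(0,0)$, and the angle at $(0,0)$ contains no axis direction. Hence no orthogonal path in $\mathrm{bd}(M\times[0,1])$ that starts on that edge ever leaves it. For $M=\mathds{R}^2$ the boundary is even two disjoint planes. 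So state compactness as an added hypothesis; it holds in the paper's only application, where $M$ is a triangle.

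\textbf{Your closing special case is unnecessary and slightly inaccurate.} For a segment $M$ the relative boundary is its two endpoints, not $M$. Your main argument already works verbatim with relative boundaries for every compact $M$ with at least two points, since such an $M$ has nonempty relative boundary. The only leftover case is a single point. There the statement itself fails: the relative boundary of the cylinder is just two points, which cannot be joined.
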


We first show that the cuboctahedron is orthogonally decomposable.

\begin{thm}\label{cuboctahedron}
The  cuboctahedron  can  be decomposed into ten pentahedra with orthogonally connected
boundaries.
\end{thm}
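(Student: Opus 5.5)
I would place the cuboctahedron in its standard position, with vertices the twelve points $(\pm1,\pm1,0),(\pm1,0,\pm1),(0,\pm1,\pm1)$. Its six square faces then lie in the planes $x=\pm1$, $y=\pm1$, $z=\pm1$, so each carries two marks. Its eight triangles have no marks, and every edge is unmarked. The idea is to cut off a central piece that is a right cylinder over an orthogonally connected planar set, and to split what remains into pentahedra whose boundaries contain a doubly-marked face, to which everything else can be joined by orthogonal paths.

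Concretely, the central piece would be the cube $[-1/2,1/2]^3$ or, better, a prism with axis parallel to the $z$-axis. The natural candidate is the right prism over the square $\mathrm{conv}\{(\pm1,0),(0,\pm1)\}$. That square is not axis-parallel, so I would instead use the hexagonal slab structure. Let $Q=\mathrm{conv}\{(\pm1,0,\pm1),(0,\pm1,\pm1)\}$. This is a right cylinder, in the direction of the $z$-axis, over a square rotated by $45^\circ$, and such a square is not orthogonally connected in the plane. So this does not work directly, and I would rotate first: rotate the cuboctahedron by $45^\circ$ about the $z$-axis. The vertical squares then become the faces parallel to the $x$- and $y$-axes diagonally, and the middle prism $Q$ becomes a right prism over an axis-parallel square. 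By Lemma~\ref{orcylin}, $\mathrm{bd}Q$ is orthogonally connected; $Q$ has six faces.

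The remainder $P\setminus Q$ consists of two caps, one on top and one on bottom, each a square pyramid-like wedge with four side pieces. Each of the eight triangles and the four lateral vertical squares must be distributed. I expect the count to work as follows: each of the eight corner regions near a triangle gives a pentahedron, and two further pentahedra come from the top and bottom square regions. Together with the cuts of the lateral squares, this would total ten pentahedra. I would choose the cuts so that each piece has a face with marks $\{x,z\}$ or $\{y,z\}$, and, as in Theorem~\ref{tetrah}, every other face has a mark parallel to one edge direction of that face. For each piece I would then run the argument of Theorems~\ref{regoctahedron} and \ref{tetrah}. From any point of the boundary, follow a path along marked directions (one or two segments) to reach an edge of the doubly-marked face. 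Then connect any two points inside that face, since its mark set has cardinality $2$.

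The main obstacle is finding the explicit cutting planes so that every piece is a pentahedron, every face of each piece is marked, and the single-segment transfers actually land in the doubly-marked face rather than cycling as in the non-rich case of Theorem~\ref{Pifforconn}. If the rotated-prism approach does not give ten pieces, I would instead drop the central prism and cut the cuboctahedron by the three coordinate-type planes through the square diagonals. I would then check the resulting pieces case by case using the symmetry group, verifying one representative of each orbit.
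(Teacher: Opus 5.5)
There is a genuine gap: you never actually produce a decomposition, and the picture you base the count on is wrong. The $45^\circ$ rotation about the $z$-axis is indeed forced. In standard position all eight triangles are unmarked, so no piece having a face inside a triangle of $P$ can have an orthogonally connected boundary; this rules out your fallback in that position. In any position, three cutting planes through the centre give eight pieces, not ten. After the rotation the vertices are $(\pm s,\pm s,\pm 1)$ with $s=1/\sqrt{2}$, together with $(\pm\sqrt{2},0,0)$ and $(0,\pm\sqrt{2},0)$, and $Q=[-s,s]^2\times[-1,1]$. The top and bottom squares of $P$ are faces of $Q$, so $\mathrm{cl}(P\setminus Q)$ is not a pair of caps. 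It consists of four congruent square pyramids, each with apex an equatorial vertex and base a vertical face of $Q$. Also, $Q$ is a hexahedron, so it cannot be kept as one of ten pentahedra. The paper cuts it by a vertical diagonal plane into two triangular prisms over right isosceles triangles with axis-parallel legs. These triangles are orthogonally connected, so Lemma~\ref{orcylin} still applies.

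The essential missing step concerns the pyramids. Take the pyramid $K$ with apex $A=(\sqrt{2},0,0)$. It is a pentahedron, but $\mathrm{bd}K$ is not orthogonally connected. Its base lies in $x=s$ (marks $\{y,z\}$), its two triangles of $P$ have mark $\{y\}$, and its two half-squares lie in $x\pm y=\sqrt{2}$ (mark $\{z\}$). So every orthogonal path in $\mathrm{bd}K$ keeps the $x$-coordinate constant, and $K$ must be cut so that a face carrying an $x$-mark appears. The paper cuts $K$ by the horizontal plane through $A$ into two pentahedra, each having a triangle $uAu'$ with mark set $\{x,y\}$ as a face. Points on the base or on the $\{z\}$-faces reach this triangle by a vertical segment. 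Points on the triangle of $P$ first move parallel to the $y$-axis to an edge through $A$ and then vertically. (Cutting $K$ by the plane $y=0$ instead also works and gives each half an $\{x,z\}$-face, as you intended.) This yields $2+4\cdot 2=10$ pentahedra. It matches your heuristic of eight pieces each carrying one triangle of $P$ plus two carrying the squares, but only once the latter two are recognized as the halves of $Q$ and the former as the halves of the four lateral pyramids.
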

\begin{proof}

\begin{figure}[h]
\centering
\includegraphics[width=2.4in]{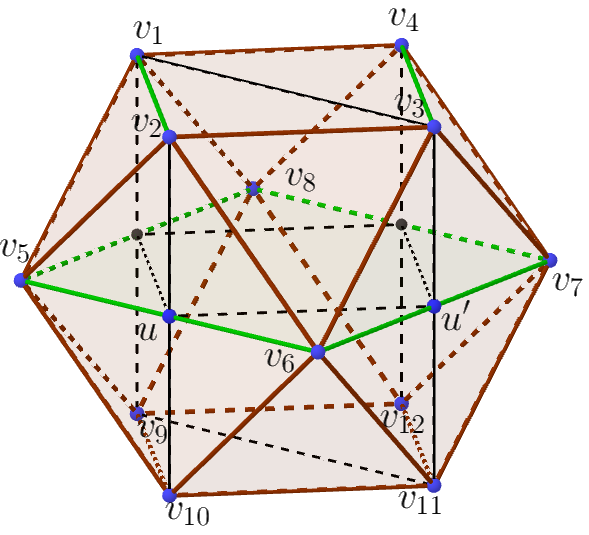}
\caption{ }
\label{jieban}
\end{figure}

Let $P\subset \mathds{R}^{3}$ be a cuboctahedron with vertices $v_i$ ($i=1,2,\ldots,12$)
such that $v_1v_2 $ is parallel to the $y$-axis and $v_1v_4 $ is parallel to the $x$-axis;
see Figure \ref{jieban}.

$Z_1=v_1v_2v_3v_9v_{10}v_{11}$ and $Z_2=v_1v_3v_4v_9v_{11}v_{12} $ are two right prisms
with bases $v_9v_{10}v_{11}$, $v_9v_{11}v_{12}$. Since $v_9v_{10}v_{11}, v_9v_{11}v_{12}$
are orthogonally connected,  both $\mathrm{bd}Z_1,\mathrm{bd}Z_2 $ are orthogonally
connected, since the boundary of any right cylinder is orthogonally connected, by Lemma \ref{orcylin}.

Notice that $\mathrm{cl}(P\setminus (Z_1\cup Z_2)) $ is the union four symmetric
pentahedra. Now, we consider one of them, say  $ K=v_2v_3v_6v_{10}v_{11} $. Take a plane $
H$ through $v_6$  orthogonal to the $ z$-axis, and put $\{u\}=H\cap v_2v_{10}, \{
u'\}=H\cap v_3v_{11}$. Put  $K'= uv_6u'v_{10}v_{11}$  and $K''= uv_6u'v_2v_3$, so $K=K'\cup
K''$.

Every point of $\mathrm{bd}K'$ can be joined in  $\mathrm{bd}K'$ with some point of $
F=uv_6u'$ through an orthogonal path.
Choose a point $p\in \mathrm{bd}K'$.
 If $p\in v_6v_{10}v_{11} $, then $[p,p',p'']$ is an orthogonal path from $p$ to $p''\in
 F$ in $\mathrm{bd}K' $, where  $pp'\parallel v_{10}v_{11} $, $p'\in  v_6v_{10}$  and
 $p''=\pi_{\overline{v_6u}}(p')$.
 If $p\in \mathrm{bd}K' \setminus v_6v_{10}v_{11}$, then $p\pi_{\overline{F}}(p)$ is
 parallel to the $z$-axis.

Since $ F$ has as mark set $\{x,y\}$, for any two points in it, there is an orthogonal
path connecting them in $F$.
Thus,  the boundary of $K'$ is orthogonally connected. For the boundary of  $ K'' $, the
proof is analogous. Hence,
$K$ can be decomposed into two pentahedra with orthogonally connected boundaries. The
other three  pentahedra in $\mathrm{cl}(P\setminus (Z_1\cup Z_2)) $  can be similarly decomposed.
\end{proof}

This is not the only way to suitably decompose the cuboctahedron. We can decrease the
number of pieces to two.
\begin{thm}
The  cuboctahedron  can  be decomposed into two  decahedra with orthogonally connected
boundaries.
\end{thm}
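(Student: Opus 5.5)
The plan is to cut the cuboctahedron by a single plane into two congruent halves, each a decahedron. Then I will show directly that the boundary of one half is orthogonally connected, by routing every boundary point to one face whose mark set has cardinality $2$.

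\textbf{Position and cut.} I place the cuboctahedron with centre $\mathbf{0}$ and vertices
$(\pm\sqrt2,0,0)$, $(0,\pm\sqrt2,0)$, $(\pm a,\pm a,\pm 1)$, where $a=1/\sqrt2$. This is the standard cuboctahedron rotated by $45^\circ$ about the $z$-axis, so the top and bottom squares have edges parallel to the $x$- and $y$-axes, matching the position used in Theorem~\ref{cuboctahedron}. I cut by the plane $z=0$, which contains the four vertices $(\pm\sqrt2,0,0),(0,\pm\sqrt2,0)$ of the central equatorial square. Let $P_1$ be the upper half and $P_2$ the lower half; by symmetry it suffices to treat $P_1$. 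Its faces are:
\begin{itemize}
\item the top square $z=1$, with $m=\{x,y\}$;
\item the base square $B$ in $z=0$, also with $m=\{x,y\}$ (its edges are diagonal, but the face is horizontal);
\item four triangles such as $(a,a,1)(a,-a,1)(\sqrt2,0,0)$, each containing a horizontal top edge and hence marked $\{y\}$ or $\{x\}$;
\item four triangles that are the upper halves of the lateral squares, e.g.\ the half of the square in the plane $x+y=\sqrt2$, each marked $\{z\}$.
\end{itemize}
That gives $10$ faces, so $P_1$ and $P_2$ are decahedra. Every face has a nonempty mark set and $P_1$ is rich, consistent with Theorem~\ref{Pifforconn} (although $P_1$ is not simple, so that theorem is used only as a sanity check).

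\textbf{Routing to $B$.} As in the proofs of Theorems~\ref{regoctahedron} and \ref{cuboctahedron}, I show that every point of $\mathrm{bd}P_1$ is joined in $\mathrm{bd}P_1$ to a point of $B$ by an orthogonal path. Since $m(B)=\{x,y\}$, any two points of $B$ are joined inside $B$, and this finishes the proof.
\begin{itemize}
\item For $p$ in a half-square face (mark $\{z\}$), the vertical segment from $p$ to $\pi_{\overline B}(p)$ stays in that face, because the face is a vertical triangle whose lower edge lies in $B$.
\item For $p$ in a triangle such as $(a,a,1)(a,-a,1)(\sqrt2,0,0)$, I move parallel to the $y$-axis inside the triangle until I hit a slanted edge, say at $p'\in (a,a,1)(\sqrt2,0,0)$. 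This edge is shared with the vertical half-square face in $x+y=\sqrt2$, so I descend vertically from $p'$ to $B$.
\item For $p$ in the top square, I first move orthogonally within it to a vertex, e.g.\ $(a,a,1)$. That vertex lies on a vertical half-square face, so I descend vertically to $(a,a,0)$, which lies on the edge of $B$.
\end{itemize}

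\textbf{Expected obstacle.} The only delicate point is the top square. Its edges border triangles whose single mark is parallel to that very edge, so one cannot leave the top square through the interior of an edge. The escape must go through a vertex, and I have to check that each top vertex indeed lies on a face marked $\{z\}$. It does: $(a,a,1)$ lies on the lateral square in $x+y=\sqrt2$. With this verified, and with the symmetric argument for $P_2$, the cuboctahedron is the union of the two decahedra $P_1,P_2$ with orthogonally connected boundaries.
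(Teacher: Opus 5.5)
Your proposal is correct and follows the paper's proof essentially step for step: the paper also cuts along the equatorial square $F=v_5v_6v_7v_8$ (your $B$), sends points of the vertical half-squares straight down, sends points of the slanted triangles along their marked direction to an edge shared with a half-square and then down, and sends points of the top square first to a vertex such as $v_2$ and then vertically to the midpoint of an edge of $F$. Your final step, that $m(B)=\{x,y\}$ makes $B$ orthogonally connected, is stated the same way in the paper; it holds here because $B$ has no acute angles (cf.\ Lemma~\ref{cp-oc}), although a mark set $\{x,y\}$ alone would not suffice for an arbitrary horizontal polygon.
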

\begin{proof}
\begin{figure}[h]
\centering
\includegraphics[width=2.4in]{figure/jieblf2.png}
\caption{ }
\label{jieban2}
\end{figure}

 Let $P\subset \mathds{R}^{3}$ be a cuboctahedron with vertices $v_i$ ($i=1,2,\ldots,12$)
 such that $v_1v_2 $ is parallel to the $y$-axis and $v_1v_4 $ is parallel to the
 $x$-axis; see Figure \ref{jieban2}.
 Put $P_1=v_1v_2v_3v_4v_5v_6v_7v_8$ and $ P_2=v_5v_6v_7v_8v_9v_{10}v_{11}v_{12}$; then  $
 P=P_1\cup P_2$.

 Every point of $\mathrm{bd}P_1$ can be joined in  $\mathrm{bd}P_1$ with a point of $ F=
 v_5v_6v_7v_8$ through an orthogonal path. Indeed, choose $p\in \mathrm{bd}P_1$.
 If $p\in v_1v_2v_3v_4$, then $[p,\pi_{\overline{v_2v_3}}(p), v_2,
 \pi_{\overline{v_5v_6}}(v_2)]$ is  an orthogonal path in $ \mathrm{bd}P_1$ joining $p$
 and $  \pi_{\overline{v_5v_6}}(v_2) \in F$.
 If $p\in v_2v_3v_6$, take $p'\in v_2v_6$ such that $pp'\parallel v_2v_3$.
The  orthogonal path  $[p,p',\pi_{\overline{v_5v_6}}(p')]$ joins $p$ and
 $\pi_{\overline{v_5v_6}}(p')\in   F$.   Similarly, if $p$ is in  another triangular face  of $P_1$
 with mark set $\{x\}$ or $\{y\}$, then it can be connected to a point of $F$ with an orthogonal path.
If $p$ is in $v_1v_5v_8$ or $v_2v_5v_6 $ or $ v_3v_6v_7$ or $v_4v_7v_8$, $[p,\pi_{\overline{F}}(p)]$ alone is a  suitable orthogonal
path.

 Since $F$ has as mark set $\{x,y\}$, for any two points in it there is an orthogonal
 path connecting them in $ F$.
 Thus, $\mathrm{bd}P_1$ is orthogonally connected.
For $ \mathrm{bd}P_2$, the proof  is analogous.
\end{proof}

The following lemma appears in \cite{DYZ}.

\begin{lem}\label{co-oc}
Any connected open set is orthogonally connected.
\end{lem}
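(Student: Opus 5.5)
The plan is to cover the open set by small open boxes and chain boxes together along a path. Let $U\subset\mathds{R}^3$ be a connected open set (the argument is identical in $\mathds{R}^2$), and let $p,q\in U$ be distinct. Since $U$ is open and connected, it is path connected, so there is a continuous path $\gamma:[0,1]\to U$ with $\gamma(0)=p$ and $\gamma(1)=q$. The set $\gamma([0,1])$ is compact and lies in $U$. Hence there is $\varepsilon>0$ such that the closed cube $C_\varepsilon(t)$, centred at $\gamma(t)$ with edges parallel to the coordinate axes and half side length $\varepsilon$, lies in $U$ for every $t$. One way to get this is to take $\varepsilon$ less than $\frac{1}{\sqrt3}\,\mathrm{dist}(\gamma([0,1]),\mathds{R}^3\setminus U)$, which is positive by compactness. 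If $U=\mathds{R}^3$, any $\varepsilon$ works.

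The key step is the following observation: an axis-parallel box is orthogonally connected. Indeed, two points $a=(a_1,a_2,a_3)$ and $b=(b_1,b_2,b_3)$ of the box are joined by the path
\[
[a,(b_1,a_2,a_3),(b_1,b_2,a_3),b],
\]
which stays in the box because the box is a product of intervals. Degenerate segments are simply dropped.

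Next, use uniform continuity of $\gamma$ to choose a partition $0=t_0<t_1<\dots<t_n=1$ such that $\gamma(t_{i+1})\in C_\varepsilon(t_i)$ for every $i$. Then join $\gamma(t_i)$ to $\gamma(t_{i+1})$ by the three-segment orthogonal path inside $C_\varepsilon(t_i)\subset U$. Concatenating these paths gives an orthogonal polygonal path from $p$ to $q$ contained in $U$. If consecutive points coincide, that step is skipped. Also, a polygonal path may revisit points, which the definition allows. So $U$ is orthogonally connected.

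The main obstacle is only the uniform choice of $\varepsilon$ together with the partition. Both come from compactness of $[0,1]$: the distance from $\gamma([0,1])$ to the complement of $U$ is positive, and $\gamma$ is uniformly continuous.
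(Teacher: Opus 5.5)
Your proof is correct and complete. The choice $\varepsilon<\mathrm{dist}(\gamma([0,1]),\mathds{R}^3\setminus U)/\sqrt3$ puts every axis-parallel cube $C_\varepsilon(t)$ inside $U$. Uniform continuity gives a partition with $\gamma(t_{i+1})\in C_\varepsilon(t_i)$, and the three-segment staircase path inside each cube finishes each step.

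Note that the paper does not prove this lemma at all; it quotes it from \cite{DYZ}. So there is no in-paper argument to match, and yours serves as a self-contained justification.

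A common alternative avoids the auxiliary continuous path. Fix $p\in U$ and let $A$ be the set of points of $U$ reachable from $p$ by an orthogonal path in $U$. Every point of $U$ has an open axis-parallel box around it inside $U$, and any two points of such a box are joined by your staircase path. Hence both $A$ and $U\setminus A$ are open, so $A=U$ by connectedness.

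That route uses connectedness directly and in passing reproves path-connectedness. Yours instead relies on the standard fact that connected open sets are path connected, plus compactness and uniform continuity. In exchange, yours gives an explicit orthogonal path shadowing any prescribed curve, with at most $3n$ segments.

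Your remark that the same proof works in $\mathds{R}^2$ matters here. The paper uses the lemma, through Lemma \ref{cp-oc}, for interiors of planar faces lying in planes parallel to two coordinate axes. The statement would fail for sets that are only relatively open in a plane not parallel to two axes, and your use of axis-parallel cubes correctly reflects that restriction.
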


\begin{lem}\label{cp-oc}
If $M \in \mathds{R}^{2}$ is a polygon every acute angle $\angle uvw$
of which admits a horizontal or vertical line-segment $vv'\subset M$,
then $M$ is orthogonally connected.
\end{lem}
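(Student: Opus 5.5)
Since $M$ is a polygon in $\mathds{R}^2$, the only orthogonal directions are horizontal and vertical. The plan is to use Lemma \ref{co-oc} on the interior of $M$ and to treat boundary points separately.

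First, $\mathrm{int}\,M$ is a connected open set, because a polygon is the closure of a connected open region (and for a convex polygon this is immediate). By Lemma \ref{co-oc}, any two interior points are joined by an orthogonal path inside $\mathrm{int}\,M\subset M$. It therefore suffices to show that every boundary point $p\in\mathrm{bd}M$ can be joined by an orthogonal path in $M$ to some interior point. Concatenating such paths with the interior path then connects any two points of $M$.

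Take $p\in\mathrm{bd}M$ and consider the cases.

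\emph{$p$ lies in the relative interior of an edge $e$.} Suppose first that $e$ is neither horizontal nor vertical. Then the inward normal has nonzero horizontal and vertical components, so at least one of the two axis directions points strictly into $M$. Hence a short horizontal or vertical segment from $p$ enters $\mathrm{int}\,M$. If $e$ is horizontal (respectively vertical), the vertical (respectively horizontal) direction perpendicular to $e$ points inward and does the job.

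\emph{$p$ is a vertex $v$ with angle $\angle uvw$.} If the angle is non-acute, I would argue that some axis direction enters the interior of the angular region at $v$. The angular region at $v$ has opening at least $\pi/2$, and a closed angle of at least $\pi/2$ always contains an axis direction. Strictly inside is needed, though. If the opening exceeds $\pi/2$, the open angle contains an axis direction, since any open arc of length greater than $\pi/2$ on the circle meets one of the four axis directions. If the angle is exactly right, either its sides are axis-parallel, in which case we walk along a side to an edge point and apply the previous case, or the open angle contains an axis direction. If the angle is acute, the hypothesis supplies an axis-parallel segment $vv'\subset M$. If $vv'$ runs along an edge, we move to an edge point and reduce to the first case. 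Otherwise $vv'$ enters the interior.

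The main obstacle is the bookkeeping for the non-acute and exactly-right-angle cases, where the axis direction may coincide with a side. In each such situation I plan to reduce to the edge case by moving along the axis-parallel side to a relative-interior point of that edge. Reflex angles, if $M$ is non-convex, are easier, since their openings exceed $\pi$.
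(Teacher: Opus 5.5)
Your proof is correct and follows essentially the same route as the paper: join each point of $M$ by axis-parallel segments first to a non-vertex boundary point and then into $\mathrm{int}\,M$, and connect the interior points via Lemma~\ref{co-oc}. Your case analysis (edges, and non-acute, right and acute vertices) makes explicit the two existence claims that the paper states without detail.
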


\begin{proof}
By the hypothesis, for any $x, y\in M$,
there exist points $x', y'\in M$ not vertices of $M$, such that
$xx', yy'\subset M$ are horizontal or vertical.
Further, there exist points $x'', y''\in \mathrm{int}M$
such that $x'x'', y'y''\subset M$ are horizontal or vertical.
Since $\mathrm{int}M$ is a connected open set,
$x''$ and $y''$ are joined by an orthogonal path in $\mathrm{int}M$,
by Lemma \ref{co-oc}.
Hence, $x$ and $y$ are connected by an orthogonal path in $M$.
\end{proof}

\begin{thm}\label{trunoctahedron}
The truncated octahedron  can  be decomposed into four octahedra with orthogonally
connected boundaries.
\end{thm}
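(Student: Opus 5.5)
The approach is to choose the rotation first, so that every face of the truncated octahedron gets a non-empty mark set, and then to cut with planes that themselves carry two marks. Place the truncated octahedron with vertices at the permutations of $(0,\pm1,\pm2)$. Then rotate it by $45^\circ$ about the $z$-axis. After this rotation:
\begin{itemize}
\item the two squares $z=\pm2$ keep the mark set $\{x,y\}$;
\item the four other squares become vertical planes with mark set $\{z\}$;
\item each hexagon, whose normal $(\pm1,\pm1,\pm1)$ is sent to $(\pm\sqrt2,0,\pm1)$ or $(0,\pm\sqrt2,\pm1)$, gets the mark set $\{y\}$ or $\{x\}$.
\end{itemize}
So the solid becomes rich, which Theorem \ref{Pifforconn} indicates is necessary in the simple case. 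In the original coordinates, I would cut along the horizontal plane $z=0$ and the plane $x=y$, which after rotation is a coordinate plane with marks $\{y,z\}$. This produces four congruent pieces.

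Next I would check that each piece is an octahedron. Take the piece $x\ge y$, $z\ge0$ (original coordinates). Its faces are:
\begin{itemize}
\item half of the square $z=2$;
\item the upper halves of the squares $x=2$ and $y=-2$;
\item the whole hexagon of the octant $(+,-,+)$;
\item halves of the hexagons of the octants $(+,+,+)$ and $(-,-,+)$;
\item the cut face in $z=0$;
\item the cut face in $x=y$.
\end{itemize}
That makes eight faces. The other three pieces follow by the symmetries $z\mapsto -z$ and $(x,y)\mapsto(-y,-x)$.

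For orthogonal connectedness of the boundary of one piece $Q$, I would follow the pattern of the proofs of Theorems \ref{thmreoc} and \ref{cuboctahedron}. Let $F_1,F_2,F_3$ be the three faces with two marks: the half-square on $z=2$, the cut in $z=0$, and the cut in $x=y$. All three are convex polygons lying in coordinate planes after rotation. By Lemma \ref{cp-oc} each is orthogonally connected; one checks that every acute corner admits an axis-parallel segment inside the face, or simply notes that the face is a right-angled or obtuse polygon. Moreover, $F_3$ meets both $F_1$ and $F_2$ along segments. Hence $F_1\cup F_2\cup F_3$ is orthogonally connected.

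It then remains to join every point $p$ of a one-mark face to this union. On the two vertical half-squares (mark $\{z\}$), move vertically to $z=0$, which reaches $F_2$. On a hexagon (half) with mark $\{x\}$ or $\{y\}$, move along that direction. This either lands on the cut plane $x=y$ directly, or lands on an edge shared with a vertical square or with another hexagon. From there one more segment, vertical or along the other horizontal mark, reaches $F_1$, $F_2$ or $F_3$. This gives paths of at most three segments, as in the cuboctahedron proofs.

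The main obstacle is this last step. I expect it to require explicit coordinates of the eight faces after rotation, because I must confirm two things. First, each axis-parallel segment started on a hexagon stays in that face until it hits an edge. Second, the face it then enters carries a mark leading towards $F_1\cup F_2\cup F_3$, with no dead-end loop like the one in the heptahedron example after Theorem \ref{Pifforconn}. I would first try to verify this for the whole hexagon of the octant $(+,-,+)$, since it is the only full hexagon in $Q$ and its edges meet the most faces. If some edge point turns out to be trapped, I would adjust the cut: for example, I would tilt or shift the horizontal cut plane, or use $x=y$ together with $x=-y$ instead. I would keep the requirement that each cut face carries two marks and that each piece has eight faces.
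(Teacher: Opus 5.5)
Your proposal is correct, but it uses a different vertical cut from the paper. The rotation is the same: the paper's normalisation $v_1v_2\parallel y$, $v_1v_4\parallel x$ is your $45^\circ$ turn. The horizontal cut through the eight equatorial vertices is also the same.

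The paper's vertical plane is $H$, through the diagonal $v_1v_3$ of the top square ($y=0$, say, in your unrotated coordinates). After rotation $H$ carries only the mark $\{z\}$. So each piece has a single two-mark hub, the half-octagon $F=v_9v_{10}v_{11}v_{12}u'u$, whose angles are all obtuse (Lemma \ref{cp-oc}). Everything is routed to $F$:
the $\{z\}$-faces (the cut hexagon $v_1v_3v_5v_7uu'$ and the pieces of the side squares) by vertical drops;
the triangular half of the top square through the vertex $v_1$;
the two full hexagons of the piece through horizontal sections $H_p\cap\mathrm{bd}P_1$.

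Your plane $x=y$ passes through the midpoints of two opposite edges of the top square. After rotation it is a coordinate plane, so the cut face is itself a two-mark trapezoid joining the top rectangle to the equatorial half-octagon. This buys a larger hub, only one full hexagon per piece, and paths of at most three segments.

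The step you flagged as the main obstacle does go through. Put $a=x-y$, $b=x+y$, $c=z$, so the rotated axes are the $a$-, $b$- and $c$-directions. Your piece is $Q=\{a\ge0,\ c\ge0,\ \max(|a|,|b|)+c\le3,\ |a+b|\le4,\ |a-b|\le4,\ c\le2\}$. The full hexagon is $\{a=3-c,\ |b|\le\min(1+c,3-c)\}$. A $b$-segment from any of its points ends as follows.
\begin{itemize}
\item For $c\le1$ it ends on an edge of a vertical triangle $\{a\pm b=4,\ 1+c\le a\le3-c\}$. From there you drop vertically to $c=0$.
\item For $c\ge1$ it ends on an edge of a half-hexagon $\{|b|=3-c,\ 0\le a\le\min(1+c,3-c)\}$. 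From there an $a$-segment reaches the trapezoid $\{a=0,\ |b|\le3-c,\ 0\le c\le2\}$.
\end{itemize}
So no point is trapped.

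Two small corrections are needed.
\begin{itemize}
\item The trapezoid has $45^\circ$ corners, so your ``obtuse polygon'' remark does not apply to it. Use the first alternative of Lemma \ref{cp-oc} (the horizontal bottom edge at each acute corner), or simply drop vertically to $c=0$.
\item The map $(x,y)\mapsto(-y,-x)$ fixes $x-y$, so it sends your piece to itself. To reach the piece $x\le y$, use $(x,y)\mapsto(y,x)$ or $(x,y)\mapsto(-x,-y)$; both preserve the rotated axis directions.
\end{itemize}
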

\begin{proof}
\begin{figure}[h]
\centering
\includegraphics[width=2.5in]{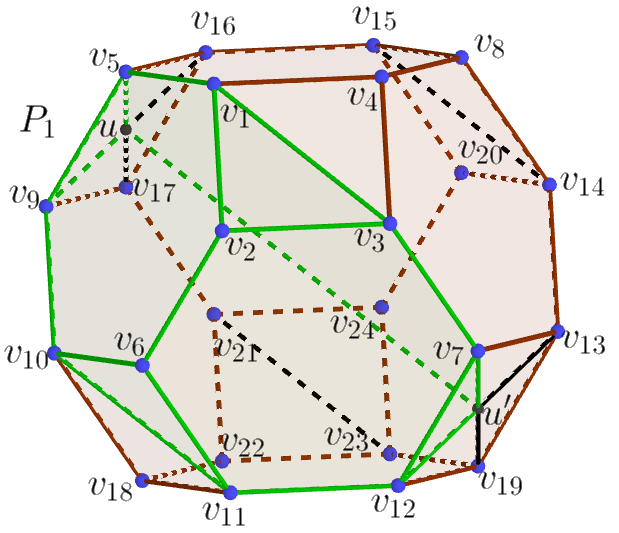}
\caption{ }
\label{jiejiaoba}
\end{figure}
Let $P\subset \mathds{R}^{3}$ be a truncated octahedron with vertices $v_i$
($i=1,2,\ldots,24$) such that $v_1v_2 $ is parallel to the $y$-axis and $v_1v_4 $ is
parallel to the $x$-axis; see Figure \ref{jiejiaoba}.
Let $ H\supset v_1v_3$ be the plane parallel to  the $z $-axis. By symmetry, $v_1v_5,
v_3v_7\subset H$. Put $\{u\}=H\cap  v_9v_{16}, \{u'\}=H\cap  v_{12}v_{13}$.
Let $P_1=v_1v_2v_3 v_7u'u v_5v_9v_{10}v_6v_{11}v_{12}$,
$P_2=v_4v_3v_1v_5 uu' v_7v_{13}v_{14}v_8v_{15}v_{16}$,
$P_3=v_{21}v_{22}v_{23}v_{19}u'u  v_{17}v_9v_{10}  v_{18}v_{11}v_{12}$,
$P_4=v_{24}v_{23}v_{21}v_{17} uu'  v_{19}$  $v_{13}v_{14}v_{20} v_{15}v_{16}$. Then $P=P_1\cup
P_2\cup P_3\cup P_4$.

Every point of $\mathrm{bd}P_1$ can be joined in  $\mathrm{bd}P_1$ with a point of $ F=
v_9v_{10}v_{11}v_{12}u'u$ through an orthogonal path. Choose a point $p\in
\mathrm{bd}P_1$.
If $p\in v_1v_2v_3v_4$, $ [p,\pi_{\overline{v_1v_2}}(p),v_1, \pi_{\overline{F}}(v_1)]$ is
an orthogonal path connecting $p$ and $\pi_{\overline{F}}(v_1)\in F$.
For $p\in v_1v_2v_6v_{10}v_9v_5 \cup v_2v_3v_7v_{12}v_{11}v_6$,  let $ H_p\ni p$ be a
plane parallel to $ v_1v_2v_3v_4$. If $H_p\cap v_2v_6\neq \emptyset$, then $p$ can be connected to some point $p'$ of $\mathrm{bd} v_1v_3v_5v_7uu'$ along $L=H_p\cap \mathrm{bd}P_1$. $[p,p',
\pi_{\overline{F}}(p')]$ is an orthogonal path connecting $p$ and
$\pi_{\overline{F}}(p')\in F$.
If $H_p\cap v_2v_6= \emptyset$,  then $H_p\cap v_6v_{10}\neq \emptyset$ and  $p$ can be connected to  a point  of
$\mathrm{bd}v_6v_{10}v_{11}$ along $L$.
Because all $v_6v_{10}v_{11}, v_5v_{9}u,v_7v_{12}u', v_1v_3v_5v_7uu' $  have as mark set
$\{z\}$, for any point $q$ in them, $q\pi_{\overline{F}}(q)$ is an orthogonal path joining
$q$ and $\pi_{\overline{F}}(q)\in F$.
Since all angles of $F$ are obtuse,
for any two points in $F$, there is an
orthogonal path in $F$, by Lemma \ref{cp-oc}.
 Then, $  \mathrm{bd}P_1 $ is orthogonally connected. Similarly,
all $\mathrm{bd}P_i$ $(i=2,3,4)$ are  orthogonally connected.
 \end{proof}

\begin{thm}\label{truncube}
The truncated cube  can  be decomposed into two decahedra with orthogonally connected
boundaries.
\end{thm}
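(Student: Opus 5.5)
I would follow the pattern used for the cuboctahedron decomposed into two decahedra: cut the truncated cube by the horizontal plane through its centre and check each half separately. Place the truncated cube $P$ with its octagonal faces parallel to the coordinate planes, so that the three octagons are axis-aligned. Take the octagon on top, $v_1\ldots v_8$, with edges alternately parallel to the $x$-axis, the $y$-axis, and to the two diagonal directions of the $xy$-plane. The plane $H$ through the centre of $P$ orthogonal to the $z$-axis meets $P$ in an octagonal section $F$, whose mark set is $\{x,y\}$. Put $P_1$ for the upper part of $P$ cut off by $H$ and $P_2$ for the lower part. I would count the faces of $P_1$: the top octagon, the four upper halves of the lateral octagons, the four upper triangles, and $F$. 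That gives ten faces, so $P_1$ is a decahedron. By the central symmetry of $P$, $P_2$ is congruent to $P_1$, so it suffices to show that $\mathrm{bd}P_1$ is orthogonally connected.

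The face $F$ is an octagon with all angles obtuse. By Lemma \ref{cp-oc}, or directly since its mark set is $\{x,y\}$, any two points of $F$ are joined by an orthogonal path in $F$. The remaining task is to join every point $p\in \mathrm{bd}P_1$ to some point of $F$ by an orthogonal path in $\mathrm{bd}P_1$, splitting by the face containing $p$:
\begin{itemize}
\item \emph{Lateral half-octagons.} Each lies in a vertical plane, so it has the mark $z$ together with $x$ or $y$. For $p$ there, the segment $p\,\pi_{\overline F}(p)$ is vertical. Because the lateral octagon is symmetric about $H$ and convex, this segment stays in the face, so it is an orthogonal path to $F$.
\item \emph{Top octagon.} For $p$ in the top octagon (marks $\{x,y\}$), move horizontally inside the top face to a point $p'$ on an edge shared with a lateral octagon. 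Then descend vertically from $p'$ to $\pi_{\overline F}(p')\in F$.
\item \emph{Upper triangles.} Each triangle has one edge on the top octagon, in a diagonal direction, and two edges on lateral octagons. Its plane contains no axis direction, so its mark set is empty, and one cannot move inside it. This is the main obstacle. The plan is to rotate $P$ so that it becomes rich in the sense of Theorem \ref{Pifforconn}, making every triangle carry a mark. For example, keep the $z$-axis orthogonal to the top octagon but turn the octagon by $22.5^\circ$, so that each triangle's top edge, and two of the lateral octagon directions, become parallel to $x$ or $y$. Then each triangle has a mark $x$ or $y$, and a point $p$ in it can slide parallel to its top edge until it hits a lateral edge of the triangle. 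From there, a vertical segment in the adjacent lateral half-octagon reaches $F$.
\end{itemize}

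I expect the delicate verification to be the orientation step. After the rotation, the lateral octagons whose planes are no longer axis-parallel in the horizontal direction still contain the vertical direction, which is all the vertical descent needs. Their horizontal moves, however, are no longer available. I would therefore check, face by face, that every point reaches either a vertical segment inside its own face or an edge from which a vertical segment within an adjacent vertical face leads to $F$. The convexity of each lateral octagon and its symmetry about $H$ guarantee that such vertical segments stay inside $\mathrm{bd}P_1$.

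Combining the three cases, any two points $p,q\in\mathrm{bd}P_1$ are joined to points of $F$, and these are joined inside $F$. Hence $\mathrm{bd}P_1$ is orthogonally connected, and by symmetry so is $\mathrm{bd}P_2$.
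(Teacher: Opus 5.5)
There is a genuine error at the step you single out yourself, the choice of position, and as written it fails. Turning $P$ by $22.5^\circ$ about the $z$-axis does not make any triangle edge axis-parallel. The edges of the top octagon point in the directions $0^\circ,45^\circ,90^\circ,135^\circ$ in the standard position, and in the directions $22.5^\circ,67.5^\circ,112.5^\circ,157.5^\circ$ after your turn. Concretely, take $P$ with vertices the permutations of $(\pm(\sqrt2-1),\pm1,\pm1)$. The triangle normal $(1,1,1)$ becomes $(\cos\theta-\sin\theta,\ \sin\theta+\cos\theta,\ 1)$ with $\theta=22.5^\circ$, and the other triangle normals behave the same way. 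No coordinate vanishes, so every triangle still has empty mark set. Hence no orthogonal path leaves an interior point of a triangle, and $\mathrm{bd}P_1$ is not orthogonally connected in that position.

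The turn must be $45^\circ$. This is the paper's placement, where $v_1v_2\parallel x$ and $v_3v_4\parallel y$ are edges the top octagon shares with triangles. After a $45^\circ$ turn:
the triangles get mark sets $\{x\}$ or $\{y\}$;
the horizontal faces keep $\{x,y\}$;
all four lateral octagons get mark set $\{z\}$ only.
So your claim that two lateral octagon directions become parallel to $x$ or $y$ is also false, though harmless, since you never move horizontally inside a lateral face.

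A smaller slip: $P\cap H$ is not an octagon with obtuse angles. All triangles lie in $|z|\ge\sqrt2-1$, so $P\cap H$ is the square $[-1,1]^2\times\{0\}$ (turned with $P$). It is still orthogonally connected by Lemma~\ref{cp-oc}, and your count of ten faces is unaffected.

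With the $45^\circ$ turn your three cases go through:
a point of a triangle slides parallel to its top edge (now its mark) onto an edge shared with a lateral face;
a point of the top octagon reaches a lateral edge by an orthogonal path inside that face (a single axis-parallel segment may stop on a triangle edge, but the top octagon is orthogonally connected, so this is no obstacle);
from any point of an upper lateral half-octagon, the vertical segment down to $H$ stays in the face by convexity and symmetry about $H$.

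Your decomposition then differs slightly from the paper's. The paper cuts with the plane $z=\sqrt2-1$ through the four vertices $v_9,\dots,v_{12}$. Its $P_1$ is a cap whose lateral faces are isosceles trapezoids, while $P_2$ is a larger decahedron with hexagonal lateral faces, not congruent to $P_1$ and handled by ``similarly''. Your central cut gives two congruent halves, so the second half really is automatic. The mechanism is otherwise the same: descend vertically through $\{z\}$-marked lateral faces onto a horizontal, $\{x,y\}$-marked cut face.
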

\begin{proof}
\begin{figure}[h]
\centering
\includegraphics[width=2.2in]{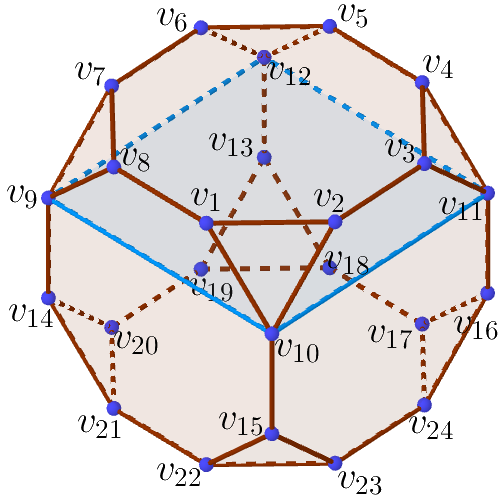}
\caption{ }
\label{jiejiaolifa}
\end{figure}
Let $P\subset \mathds{R}^{3}$ be a truncated cube with vertices $v_i$ ($i=1,2,\ldots,24$)
such that $v_1v_2 $ is parallel to the $x$-axis and $v_3v_4 $ is parallel to the $y$-axis;
see Figure \ref{jiejiaolifa}.  Then, the plane including  the square
$v_9v_{10}v_{11}v_{12} $  divides $P$ into two decahedra.
Let $v_i $ ($i=1,\ldots,12$) be the vertices of $P_1$ and $ v_j$ ($j=9,\ldots,24$)  the
vertices of $P_2$. Then $P=P_1\cup P_2$.  In $\mathrm{bd}P_1 $, triangles have as mark set
$ \{x\}$ or $ \{y\}$, the mark set of isosceles trapezoids is $ \{z\}$ and the square and the octagon
have as mark set $ \{x,y\}$.

Notice that  any point of $\mathrm{bd}P_1$ can be joined in  $\mathrm{bd}P_1$ with a point
of the square $ F= v_9v_{10}v_{11}v_{12}$ through an orthogonal path.
Choose a point  $a\in \mathrm{bd}P_1$. If $a$ is in a triangular face, then there is a
line-segment starting at $a$, parallel to the $x$-axis or the $y$-axis,  meeting an edge of an isosceles
trapezoid.
From these we continue orthogonally onto $F$.
If $a $ is in the octagon,
there is an orthogonal path connecting
 $a$ to some point  $b$ of a isosceles trapezoid, and we continue as before.
  Since $F$ is orthogonally
 connected, for any two points in $F$, there exists an orthogonal path joining them.
 Hence, $ \mathrm{bd}P_1$  is   orthogonally connected. Similarly,   $\mathrm{bd}P_2$ is
 also orthogonally connected.
 \end{proof}

\begin{thm}\label{a1}
 The  truncated tetrahedron can be decomposed into two  heptahedra with orthogonally connected boundaries.
\end{thm}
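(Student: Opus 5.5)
The plan is to imitate the proof of Theorem~\ref{tetrah}: put the truncated tetrahedron in the position inherited from a suitably rotated regular tetrahedron, cut it by one plane into two congruent halves, and show that every point of the boundary of a half can be joined orthogonally to the planar cut face, which is itself orthogonally connected.

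\textbf{Position and cut.} Let $T=v_1v_2v_3v_4$ be the regular tetrahedron placed as in Theorem~\ref{tetrah}: $v_1v_2$ lies in the $y$-axis with $\mathbf 0$ its midpoint, and $v_3v_4\parallel x$-axis meets the $z$-axis at its midpoint $v_0$. Concretely, take $v_{1,2}=(0,\pm1,-h)$ and $v_{3,4}=(\pm1,0,h)$. Let $P$ be the truncated tetrahedron obtained from $T$. Cut $P$ by the plane $H=\{x=0\}$, which contains $v_1v_2$ and $v_0$. This plane is a symmetry plane of $P$, so it splits $P$ into two congruent pieces $P_1\ni v_3$ and $P_2\ni v_4$.

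\textbf{Faces and marks of $P_1$.} I expect $P_1$ to have exactly seven faces:
\begin{itemize}
\item the full hexagon lying in the original face $v_1v_2v_3$;
\item the truncation triangle at $v_3$;
\item the two halves of the hexagons lying in $v_1v_3v_4$ and $v_2v_3v_4$;
\item the two halves of the truncation triangles at $v_1$ and $v_2$;
\item the section $F=P\cap H$.
\end{itemize}
So $P_1$ is a heptahedron. A direct check gives the marks:
\begin{itemize}
\item $m(F)=\{y,z\}$;
\item the hexagon in $v_1v_2v_3$ has mark $\{y\}$, since it contains $v_1v_2$;
\item the two half-hexagons have mark $\{x\}$, since they contain pieces of $v_3v_4$;
\item the triangles at $v_1$ and $v_2$ have normals $(0,\pm1,-h)$, hence mark $\{x\}$;
\item the triangle at $v_3$ has normal $(1,0,h)$, hence mark $\{y\}$.
\end{itemize}
In particular every face has nonempty marks and $P_1$ is rich, consistent with Theorem~\ref{Pifforconn}.

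\textbf{Joining points to $F$.} The goal is that every $p\in\mathrm{bd}P_1$ can be joined in $\mathrm{bd}P_1$ to a point of $F$.
\begin{itemize}
\item If $p$ lies in an $\{x\}$-marked face, the segment through $p$ parallel to the $x$-axis runs in that face towards $H$. Since these faces are cut by $H$, this segment meets their edge lying in $F$, which settles this case directly.
\item If $p$ lies in the hexagon in $v_1v_2v_3$ or in the triangle at $v_3$, first move parallel to the $y$-axis. In either face this segment ends on an edge shared with one of the $\{x\}$-marked faces: the half-hexagons, or the triangles at $v_1,v_2$. Then continue parallel to the $x$-axis onto $F$, as in the first case.
\end{itemize}
Vertices are handled by the same moves.

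\textbf{Connectedness of $F$.} It remains to show $F$ is orthogonally connected, and this is the step I expect to need actual computation. $F$ is the triangle $v_1v_2v_0$ of Theorem~\ref{tetrah} with its corners at $v_1$ and $v_2$ cut off by the truncation triangles, and with $v_0$ cut off by the truncation edges at $v_3,v_4$. So $F$ is a pentagon or hexagon in the $yz$-plane. I would compute its vertices from the coordinates and check that any acute angle admits an axis-parallel segment into $F$. The angle at $v_0$, if it survives, is bisected by the vertical segment $v_0\mathbf 0$, so it qualifies. Lemma~\ref{cp-oc} then gives orthogonal connectedness of $F$. Combining this with the previous step shows $\mathrm{bd}P_1$ is orthogonally connected, and $\mathrm{bd}P_2$ follows by the symmetry $x\mapsto -x$.
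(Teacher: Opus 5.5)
Your proof is correct and is essentially the paper's. The paper cuts by the same symmetry plane, through one hexagon--hexagon edge and orthogonal to the opposite one; its configuration is yours turned a quarter turn about the $x$-axis. It obtains the same two heptahedra, projects the $\{x\}$-marked half-faces straight onto $F$, first slides along the full hexagon and the far triangle in the direction of their single mark, and concludes with Lemma~\ref{cp-oc} via the axis-parallel segment bisecting the acute angle of $F$.

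Two small corrections. Your guess that $v_0$ is cut off is wrong, and the computation you deferred settles the hedge favourably. $v_0$ is the midpoint of the middle third of $v_3v_4$, which is itself an edge of $P$, so $F$ is a pentagon (the paper's $v_1u_1u_2u_3v_9$). Its angles are $\arccos(1/3)$ at $v_0$, $\arccos(-1/3)$ at the midpoints of the $x$-parallel edges of the truncation triangles at $v_1,v_2$, and $\arccos(-1/\sqrt3)$ at the endpoints of the middle third of $v_1v_2$. So the only acute angle is the one your vertical segment $v_0\mathbf 0$ handles. Also, your coordinates need $h=1/\sqrt2$ for $T$ to be regular, and they put $v_1v_2$ parallel to the $y$-axis (at height $-h$), not on it.
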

\begin{proof}\begin{figure}[h]
\centering
\includegraphics[width=2in]{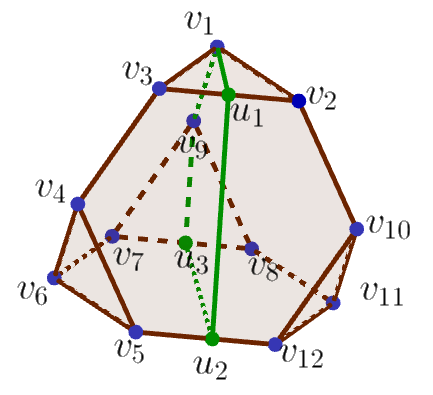}\\
\caption{}
\label{oda1}
\end{figure}
Let $P$ be a truncated tetrahedron  with vertices $v_i$ ($i=1,2,\ldots,12$)
such that $v_1v_9 $ is parallel to the $z$-axis and $v_2v_3 $ is parallel to the $x$-axis;
see Figure \ref{oda1}.
Let $H$ be the plane including $v_1v_9 $ orthogonal to $v_2v_3  $. Take $\{u_1\}=H\cap v_2v_3,\{u_2\}=H\cap v_5v_{12},\{u_3\}=H\cap v_7v_8$.  Then, $P_1=v_1 v_3v_4v_5v_6v_7v_9u_3u_2u_1, P_2=v_1v_2v_{10}v_{12}v_{11} v_8v_9 u_3u_2u_1 $ are two heptahedra and $P=P_1\cup P_2$.

By symmetry,  $ u_2$ is the midpoint of $v_5$ and $v_{12} $.
Then, the mark set of $F= v_1u_1u_2u_3v_9$ is $\{y,z\}$.
Moreover, the angle $\angle u_3u_2u_1$ is acute
and the other angles of $F$ are obtuse.
Let $w=(v_1+v_9)/2$.
Then $u_2w$ is parallel to the $y$-axis.
By Lemma \ref{cp-oc}, $F$ is orthogonally connected.
Every point $p\in \mathrm{bd}P_1$  can be joined in  $\mathrm{bd}P_1$ with a point
of  $ F$ through an orthogonal path.
 Indeed, if $p\in u_1u_2v_5v_4v_3\cup u_3u_2v_5v_6v_7 \cup v_7v_9v_3\cup u_1v_1v_3$
 then $[p,\pi_{\overline{F}}(p)]$ is a suitable orthogonal path. If $p\in v_4v_5v_6$, we reach $v_5v_6$ by travelling vertically. If $p\in  v_1v_3v_4v_6v_7v_9$, we move vertically until we meet $v_6v_7$ or $ v_7v_9$, and then continue as described above.
  Since $ F$ is orthogonally connected,
$ \mathrm{bd}P_1$  is   orthogonally connected, too.  Similarly,   $\mathrm{bd}P_2$ is
 also orthogonally connected.
\end{proof}

\section{Orthogonal non-decomposability}\label{Sect-ond}

In Theorem \ref{regoctahedron}, Corollary \ref{regtetrah}, Theorem \ref{cuboctahedron},
Theorem \ref{trunoctahedron},  Theorem \ref{truncube}  and Theorem \ref{a1},  we have seen that several
well-known Platonic and Archimedean solids are  orthogonally decomposable (after a suitable rotation).
Are all of them orthogonally decomposable?

If a polytope  is orthogonally decomposable,
then the mark set of each of its faces is non-empty.
Indeed, if  the mark set of one face $F$ is empty, then  there is no orthogonal path starting at any point in $\mathrm{int}F$ and the whole surface cannot be orthogonally decomposable.

\begin{thm}\label{dihe}
 Let $P \subset \mathds{R}^{3}$ be a polytope and $F$ one of its faces, different from a
 rectangle. If all the dihedral angles formed with $F$ are larger than $ 3\pi/4$, then $P$
 is not orthogonally decomposable.
\end{thm}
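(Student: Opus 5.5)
The plan is to argue by contradiction. Suppose $P=Q_1\cup\dots\cup Q_n$ with every $\mathrm{bd}Q_i$ orthogonally connected. By the remark preceding the theorem, every face of every $Q_i$ has a non-empty mark set. Any face of $P$ of positive area is covered by faces of pieces lying in it, so each face of $P$ must itself carry a non-empty mark set. So the strategy is to use the angle hypothesis to produce a face of $P$ adjacent to $F$, or a piece of $F$, which no orthogonal path can leave except along a set of measure zero. We then finish as in the non-rich part of Theorem \ref{Pifforconn}.

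\emph{Step 1 (angle geometry).} Let $n$ be a unit normal of $F$, and let $G$ be a face of $P$ sharing the edge $e$ with $F$. The dihedral angle is larger than $3\pi/4$, so every direction lying in $\overline G$ makes an angle smaller than $\pi/4$ with $\overline F$. Conversely, a coordinate direction $d$ making angle at least $\pi/4$ with $\overline F$, i.e.\ with $|\langle d,n\rangle|\ge 1/\sqrt2$, cannot lie in $\overline G$. Since $\sum_{d\in\{x,y,z\}}\langle d,n\rangle^2=1$, the axes not parallel to $F$ split as follows. If $m(F)=\{x,y\}$, then $z\perp F$. If $m(F)=\{x\}$, then $n_y^2+n_z^2=1$, so at least one of $y,z$ is at angle $\ge\pi/4$ from $\overline F$. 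If $m(F)=\emptyset$ we are already done.

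\emph{Step 2 (case $\mathrm{card}\,m(F)=2$).} Here $F$ is a convex polygon that is not a rectangle. Hence some edge $e$ of $F$ is not parallel to the $x$- or $y$-axis; otherwise $F$ would be a rectangle. The adjacent face $G$ contains $e$, which carries no mark. The plane $\overline G$ is spanned by $e$ and a direction close to $\overline F$. It contains no axis direction: $z$ is excluded by Step 1, and $x$ or $y$ together with $e$ would force $\overline G=\overline F$. So $m(G)=\emptyset$, which is a contradiction.

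\emph{Step 3 (case $m(F)=\{x\}$).} Let $w\in\{y,z\}$ be the axis at angle $\ge\pi/4$ from $\overline F$; it lies in no adjacent face. Let $u$ be the remaining axis. The faces of $P$ adjacent to $F$ can only be marked by $x$, or by $u$ when $u$ is close enough to $\overline F$. I would then consider a piece $Q$ having a face $F'\subset F$ of positive area, and follow orthogonal paths in $\mathrm{bd}Q$ starting at a generic point of $F'$. Inside $F$ they move only in the $x$-direction. Crossing an edge of $F$ into a neighbour $G$, they can only use $x$ or $u$. I would show that this prevents ever reaching a two-mark face from which one could spread out. The paths then consist of countably many segments, so they form a set of measure zero, and we get the same contradiction as in Theorem \ref{Pifforconn}.

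The main obstacle is Step 3. The faces of $Q$ need not be faces of $P$, because interior cut faces can carry two marks, so the orthogonal path might leave $F'$ through a cut face. I would first try to control this by choosing $Q$ to contain a neighbourhood in $P$ of a point of $\mathrm{relint}\,F$ near an edge, so that a path from the interior of $F'$ must run along $x$-segments to $\mathrm{bd}F'$. Then I would argue that the set of reachable points in $F'$ is a finite union of $x$-parallel segments. That set has measure zero, while orthogonal connectedness requires it to be all of $F'$. The condition that $F$ is not a rectangle, handled as in Step 2, is where one shows that a one-mark face cannot be bypassed.
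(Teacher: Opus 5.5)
Your Steps 1 and 2 are correct and match the paper's Case I. The genuine gap is Step 3, and it cannot be closed by filling in details. You would need that, from a generic point of a face $F'\subset F$ of a piece $Q$, no orthogonal path in $\mathrm{bd}Q$ ever reaches a face with two marks. That claim is false. The dihedral-angle hypothesis only constrains the faces of $P$ adjacent to $F$. The faces of $Q$ adjacent to $F'$ can be cut faces, even when the edges of $F'$ are edges of $F$, and a cut face may contain the axis $w$ you excluded in Step 1. An $x$-chord of $F'$ ends on an edge of $F'$; the path then drops along $w$ in a cut face and reaches a face with marks $\{x,y\}$. From that point the reachable set is two-dimensional. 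So your ``finite union of $x$-parallel segments'' claim and the measure argument of Theorem \ref{Pifforconn} both fail, and choosing which piece to examine does not help.

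In fact the case $m(F)=\{x\}$ of the statement is false. Take $f(x,y)=\min\{y,\,2y,\,2x,\,2-2x\}$, $R=[-3,4]\times[-2,2]$, $0<\varepsilon<1/6$, and the polytope $P=\{(x,y,z):(x,y)\in R,\ -1\le z\le \varepsilon f(x,y)\}$.

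\emph{The face $F$.} Its face $F$ over the triangle $T$ with vertices $(0,0),(1,0),(1/2,1)$ lies in $z=\varepsilon y$, so $m(F)=\{x\}$. Its three neighbours lie in $z=2\varepsilon y$, $z=2\varepsilon x$ and $z=2\varepsilon(1-x)$. The outward normals make angles below $\pi/4$ with that of $F$, so every dihedral angle at $F$ exceeds $3\pi/4$.

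\emph{The decomposition.} The regions of $R$ where $y,2y,2x,2-2x$ attain the minimum are $T$, $M_1=\{y\le 0,\ y\le x,\ y\le 1-x\}\cap R$, $M_3=\{x\le 1/2,\ 2x\le y,\ x\le y\}\cap R$, and the mirror image $M_2$ of $M_3$ under $x\mapsto 1-x$. Cut $P$ by the vertical planes over their common edges. Each piece is $\{(p,z): p\in M,\ -1\le z\le \ell(p)\}$ with $\ell$ linear and depending only on $y$ or only on $x$. Hence its top face has mark $x$ or $y$, and its side faces are vertical. Its bottom $M\times\{-1\}$ is orthogonally connected: points of $T$ and $M_1$ slide in the $y$-direction to a horizontal edge, and points of $M_3$, resp.\ $M_2$, slide in the $x$-direction to the edge $x=-3$, resp.\ $x=4$.

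\emph{Each piece works.} From any boundary point of a piece, move along the mark of the top face to an edge, then straight down a side face to the bottom. So all four pieces have orthogonally connected boundaries, and $P$ is orthogonally decomposable.

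This is exactly the configuration the paper's Case II calls absurd: two neighbours of a triangular $F$ both marked $\{y\}$. So the paper's argument does not settle this case either. Only the two-mark case, your Step 2, is actually established.
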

\begin{proof}
Suppose, on the contrary, that $P$ is orthogonally decomposable. Then,  every face must
have a non-empty mark set.

\underline{Case \uppercase\expandafter{\romannumeral1}. }  $m(F)=\{x,y\}$. Since $F$ is
not a rectangle, $F$ has an edge $E$ with $m(E)\not\in\{x,y\}$, which implies
$m(E)=\emptyset$. If $F'$ is the face meeting $F$ along $E$, necessarily $m(F')=\{z\}$.
But this means that $F\perp F' $, which is contradicting the hypothesis.

\underline{Case \uppercase\expandafter{\romannumeral2}. }    $m(F)=\{x\}$.
Let $Y$ and $Z$ be the $y$- and $z$-axis.
Among the angles $\angle(\overline{F},Y) $ and $\angle(\overline{F},Z) $, at most one is
less than $ \pi/4$, say $ \angle(\overline{F},Y)$. At most two parallel edges of $F$ have
as mark set $\{x\}$.

Let $F'$ be a  face  of $P$, neighbour of $F$, with $m(F\cap F')\neq \{x\}$. The angle
between any line in $\overline{F'}$ and $\overline{F}$ is less than $\pi/4 $, because
the dihedral  angle between $F$ and $F'$  is larger than $ 3\pi/4$.
Then, $m(F')= \{y\}$.

Thus, if $F$ is a triangle, it has two edges with mark sets different from $\{x\}$. It
follows that both faces meeting $F $ along these two edges have the same mark set $\{y\}$,
which is absurd.

If $F$ is not a triangle, it has at least two edges not having $\{x\}$ as mark set. Then,
again, all of them have mark set  $\{y\}$, which is impossible, $F$ being different from a
rectangle.
\end{proof}
\begin{cor}
The regular icosahedron,
the   rhombicuboctahedron, the icosidodecahedron,  the truncated dodecahedron, the
truncated icosahedron, the truncated icosidodecahedron, the  rhombicosidodecahedron, the
snub cube  and the snub  dodecahedron are not orthogonally decomposable.
\end{cor}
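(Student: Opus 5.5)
The plan is to apply Theorem~\ref{dihe} to each of the nine solids. For each one I will exhibit a face $F$ that is not a rectangle and check that every dihedral angle along an edge of $F$ exceeds $3\pi/4$. Theorem~\ref{dihe} is stated for an arbitrary position of $P$, so it rules out orthogonal decomposability after any rotation. Hence no further argument about positioning is needed.

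\textbf{Steps.} I will take the following faces and use the standard dihedral angles (numerically, $3\pi/4 = 135^\circ$).
\begin{itemize}
\item Regular icosahedron: every face is a triangle, and all dihedral angles are $\arccos(-\sqrt5/3)\approx 138.19^\circ$.
\item Rhombicuboctahedron: take a triangular face. Its neighbours are squares, with dihedral angle $144.74^\circ$.
\item Icosidodecahedron: take a triangle (or a pentagon). The only dihedral angle is $142.62^\circ$.
\item Truncated dodecahedron: take a triangle. Its triangle--decagon angle is $142.62^\circ$.
\item Truncated icosahedron: take a pentagon. All its neighbours are hexagons, at $142.62^\circ$.
\item Truncated icosidodecahedron: take the decagon (or hexagon). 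The angles are decagon--square $148.28^\circ$ and decagon--hexagon $142.62^\circ$.
\item Rhombicosidodecahedron: take a pentagon, whose neighbours are squares at $148.28^\circ$. Alternatively, take a triangle, whose neighbours are squares at $159.09^\circ$.
\item Snub cube: take a square, whose neighbours are triangles at $142.98^\circ$.
\item Snub dodecahedron: take a pentagon, whose neighbours are triangles at $152.93^\circ$.
\end{itemize}
None of the chosen faces is a rectangle: they are triangles, pentagons, decagons, or, for the snub cube, a square. A square is a rectangle, so for the snub cube I must instead take a triangle. A triangle of the snub cube meets squares at $142.98^\circ$ and other triangles at $153.23^\circ$, so all of its angles still exceed $135^\circ$.

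\textbf{Main obstacle.} The only delicate point is choosing a face whose \emph{every} adjacent dihedral angle exceeds $135^\circ$. This rules out square faces in the rhombicuboctahedron and the snub cube, because squares are rectangles, and it rules out hexagons of the truncated icosahedron, since hexagon--hexagon angles are only $138.19^\circ$ (still fine, but unnecessary). I will verify the dihedral angle values from the known closed forms. For example, the rhombicuboctahedron's angle is $\arccos(-\sqrt6/3)$ and the triangle--pentagon angle is $\arccos(-\sqrt{(5+2\sqrt5)/15})$. For the snub solids I will quote the standard numeric values rather than derive them.
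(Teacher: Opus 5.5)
Your proposal is correct and follows the paper's own proof: you apply Theorem~\ref{dihe} to a suitable non-rectangular face of each solid, with the same face choices and the same dihedral-angle values, including your corrected choice of a triangular face for the snub cube. In a write-up, drop the initial square choice for the snub cube. Also drop the remark that hexagons of the truncated icosahedron are ``ruled out'': their dihedral angles ($138.19^\circ$ and $142.62^\circ$) also exceed $135^\circ$, so they satisfy the hypothesis too.
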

\begin{proof}
For the  regular icosahedron,  the dihedral angle between any two adjacent triangles
equals $\arccos (-\sqrt{5}/3)> 3\pi/4$; see \cite{dmcco}.
In the following eight cases, too, the dihedral angles are taken from \cite{dmcco}.

For the  rhombicuboctahedron, choose  $F$ to be a triangular face. Its dihedral angles
with the neighbouring squares equal $\arccos (-\sqrt{2/3})> 3\pi/4$.

For the icosidodecahedron and  the truncated dodecahedron,
consider a triangular face $F$.  Its dihedral angles with its neighbouring pentagons and  decagons equal \\
$\arccos
(-\sqrt{(5+2\sqrt{5})/15})> 3\pi/4$.

In the case of   the truncated icosahedron,  we  choose a pentagonal face  $F$. Its
dihedral angles with the  neighbouring hexagons equal \\$\arccos
(-\sqrt{(5+2\sqrt{5})/15})> 3\pi/4$, too.

For the truncated icosidodecahedron, consider a decagonal face  $F$.  Its dihedral angles
with the adjacent hexagons equal $\arccos (-\sqrt{(5+2\sqrt{5})/15})> 3\pi/4$ and with its
neighbouring squares  $\arccos (-\sqrt{(5+\sqrt{5})/10})> 3\pi/4$.

For the rhombicosidodecahedron, take  $F$ to be  a pentagonal face. Its dihedral angles with
the adjacent squares equal $\arccos (-\sqrt{(5+\sqrt{5})/10})> 3\pi/4$.

In the case of  the snub cube,  choose a triangular face $F$. Its dihedral angles with its
neighbouring squares  equal $ 142.98...^{\circ}>135^{\circ}$ and with the
neighbouring triangles  $ 153.23...^{\circ}> 135^{\circ}$.

For the snub  dodecahedron,  we consider a pentagonal face $F$. Its dihedral angles with the
neighbouring  triangles measure $152.93...^{\circ}> 135^{\circ}$.

Therefore, by Theorem \ref{dihe}, all these polytopes  are not orthogonally decomposable.
\end{proof}

\begin{thm}\label{z12}
The regular dodecahedron is not  orthogonally decomposable.
\end{thm}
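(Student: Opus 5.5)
The plan is to show that in every position of the regular dodecahedron some face has an empty mark set. By the remark preceding Theorem~\ref{dihe}, this rules out orthogonal decomposability. Theorem~\ref{dihe} itself cannot be used, because the dihedral angle $\arccos(-1/\sqrt5)\approx 116.57^\circ$ is smaller than $3\pi/4$. So the argument will work directly with the face normals.

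\textbf{Step 1 (translating marks).} A face $F$ has mark $x$ if and only if its unit normal $n_F$ is orthogonal to the $x$-axis, i.e.\ its first coordinate vanishes; similarly for $y$ and $z$. The twelve faces come in six parallel pairs, so there are six normal lines through $\mathbf{0}$. These are the six axes through opposite vertices of the dual icosahedron. Any two of them form the same angle $\theta$, with $\cos\theta = 1/\sqrt5$. Thus $P$ can be orthogonally decomposable only if each of these six lines lies in at least one of the coordinate planes $\{x=0\},\{y=0\},\{z=0\}$.

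\textbf{Step 2 (counting).} A coordinate plane contains at most two of the six lines. Three lines in a plane with equal pairwise angles $\theta$ would force $\theta=\pi/3$, but $\arccos(1/\sqrt5)\neq \pi/3$. Hence there are at most $6$ line--plane incidences in total. Since each of the $6$ lines needs at least one incidence, each plane contains exactly two lines and no line lies in two planes; in particular no line is a coordinate axis. Call the pairs $\{a,a'\}\subset\{z=0\}$, $\{b,b'\}\subset\{y=0\}$ and $\{c,c'\}\subset\{x=0\}$.

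\textbf{Step 3 (contradiction from the angles).} Write
\[
a=(\cos\alpha,\sin\alpha,0),\quad a'=(\cos\alpha',\sin\alpha',0),\quad b=(\cos\beta,0,\sin\beta),\quad b'=(\cos\beta',0,\sin\beta').
\]
The cross conditions $|a\cdot b|=|a\cdot b'|=|a'\cdot b|=1/\sqrt5$ involve only first coordinates. They give $|\cos\alpha|=|\cos\alpha'|$, and all these coordinates are nonzero. Using the pair $\{c,c'\}$ in $\{x=0\}$ in the same way on second coordinates gives $|\sin\alpha|=|\sin\alpha'|$. Since $a\neq\pm a'$, the lines $a$ and $a'$ must be mirror images in a coordinate axis. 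The angle between them is $\theta$, so each makes angle $\theta/2$ with that axis. Hence either $\cos^2\alpha$ or $\sin^2\alpha$ equals $\cos^2(\theta/2)=(1+1/\sqrt5)/2\approx 0.72$.

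Now apply the same reasoning to the other two pairs. The first coordinates of $a,a',b,b'$ satisfy $\cos^2\alpha\cos^2\beta = 1/5$. Together with the analogous relations for the second and third coordinates, and the fact that each pair has squared coordinates $\{(1\pm 1/\sqrt5)/2\}$ in some order, this forces a product of the form $\bigl((1\pm1/\sqrt5)/2\bigr)\bigl((1\pm1/\sqrt5)/2\bigr)$ to equal $1/5$. I will check that none of the three possible products equals $1/5$: they are approximately $0.524$, $0.2$ with the wrong sign pattern, and $0.076$.

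The main obstacle is this last bookkeeping step. The middle product $(1-1/5)/4=1/5$ does occur, so I must track which coordinate gets the large value $(1+1/\sqrt5)/2$ in each of the three pairs. The constraints come from three mutually consistent products: the $x$-coordinates of pair $\{a,a'\}$ with pair $\{b,b'\}$, the $y$-coordinates of $\{a,a'\}$ with $\{c,c'\}$, and the $z$-coordinates of $\{b,b'\}$ with $\{c,c'\}$. Each forces one partner to be large and the other small, and this cannot be arranged cyclically among three pairs, which is an odd cycle. If this parity argument turns out to be subtle, the fallback is to compute explicitly with a concrete orientation and its coordinate permutations.
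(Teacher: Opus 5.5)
Your Steps 1 and 2 are correct. They are the normal-vector form of the paper's first step: no mark is shared by three of the faces $F,F_1,\dots,F_5$, so these faces split into three pairs with a common mark. The gap is the closing ``odd cycle'' argument in Step 3, and it cannot be repaired.

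Put $L=(1+1/\sqrt5)/2$ and $S=(1-1/\sqrt5)/2$. Since $LS=1/5$ while $L^2,S^2\neq 1/5$, your three product conditions say the following. On each coordinate axis, of the two pairs whose planes contain it, one has squared coordinate $L$ there and the other has $S$. Each pair has one $L$-coordinate and one $S$-coordinate, so this only says that the three $L$-coordinates lie on three different axes. That is satisfiable. Take $\{a,a'\}$ large in $x$ and small in $y$, $\{b,b'\}$ small in $x$ and large in $z$, and $\{c,c'\}$ large in $y$ and small in $z$. All three conditions hold, so there is no parity obstruction.

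This configuration is realised by the dodecahedron in standard position, with vertices $(\pm1,\pm1,\pm1)$, $(0,\pm\phi^{-1},\pm\phi)$, $(\pm\phi^{-1},\pm\phi,0)$, $(\pm\phi,0,\pm\phi^{-1})$, where $\phi=(1+\sqrt5)/2$. Its face normals are $(\pm\phi,\pm1,0)$, $(\pm1,0,\pm\phi)$ and $(0,\pm\phi,\pm1)$, so every face has exactly one mark. Hence your plan of finding, in every position, a face with empty mark set cannot succeed. The explicit fallback computation you mention would have shown this at once.

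The paper's proof fails at the same point. After the pairing, it asserts that none of the three pairings of $F_2,\dots,F_5$ gives orthogonal lines. But if $F_2,F_5$ are the two neighbours of $F_1$, then $\overline{F}\cap\overline{F_1}$, $\overline{F_2}\cap\overline{F_5}$ and $\overline{F_3}\cap\overline{F_4}$ are mutually orthogonal; in standard position they are parallel to the $y$-, $x$- and $z$-axis. In fact the statement itself is false, as the following decomposition shows.

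In standard position, $P$ is the union of the cube $[-1,1]^3$ and six roofs, such as $P\cap\{z\ge1\}=\mathrm{conv}\{(\pm1,\pm1,1),(0,\pm\phi^{-1},\phi)\}$. Cut each roof by the coordinate plane through its ridge; one resulting piece is $Q=\mathrm{conv}\{(0,\pm1,1),(1,\pm1,1),(0,\pm\phi^{-1},\phi)\}$. Every point of $\mathrm{bd}Q$ reaches the base by the following route:
\begin{enumerate}
\item From the slanted trapezoid (mark $\{y\}$), move parallel to the $y$-axis onto a triangle (mark $\{x\}$).
\item From the triangle, move parallel to the $x$-axis onto the cut face in $x=0$ (marks $\{y,z\}$).
\item From the cut face, move down parallel to the $z$-axis onto the base rectangle in $z=1$ (marks $\{x,y\}$).
\end{enumerate}
So $\mathrm{bd}Q$ is orthogonally connected. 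The other eleven pieces are images of $Q$ under cyclic coordinate permutations and sign changes, which preserve both $P$ and orthogonal connectedness. Thus the regular dodecahedron is orthogonally decomposable into $13$ pieces, and neither your argument nor the paper's can be completed.
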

\begin{proof}
\begin{figure}[h]
\centering
\includegraphics[width=0.5\textwidth]{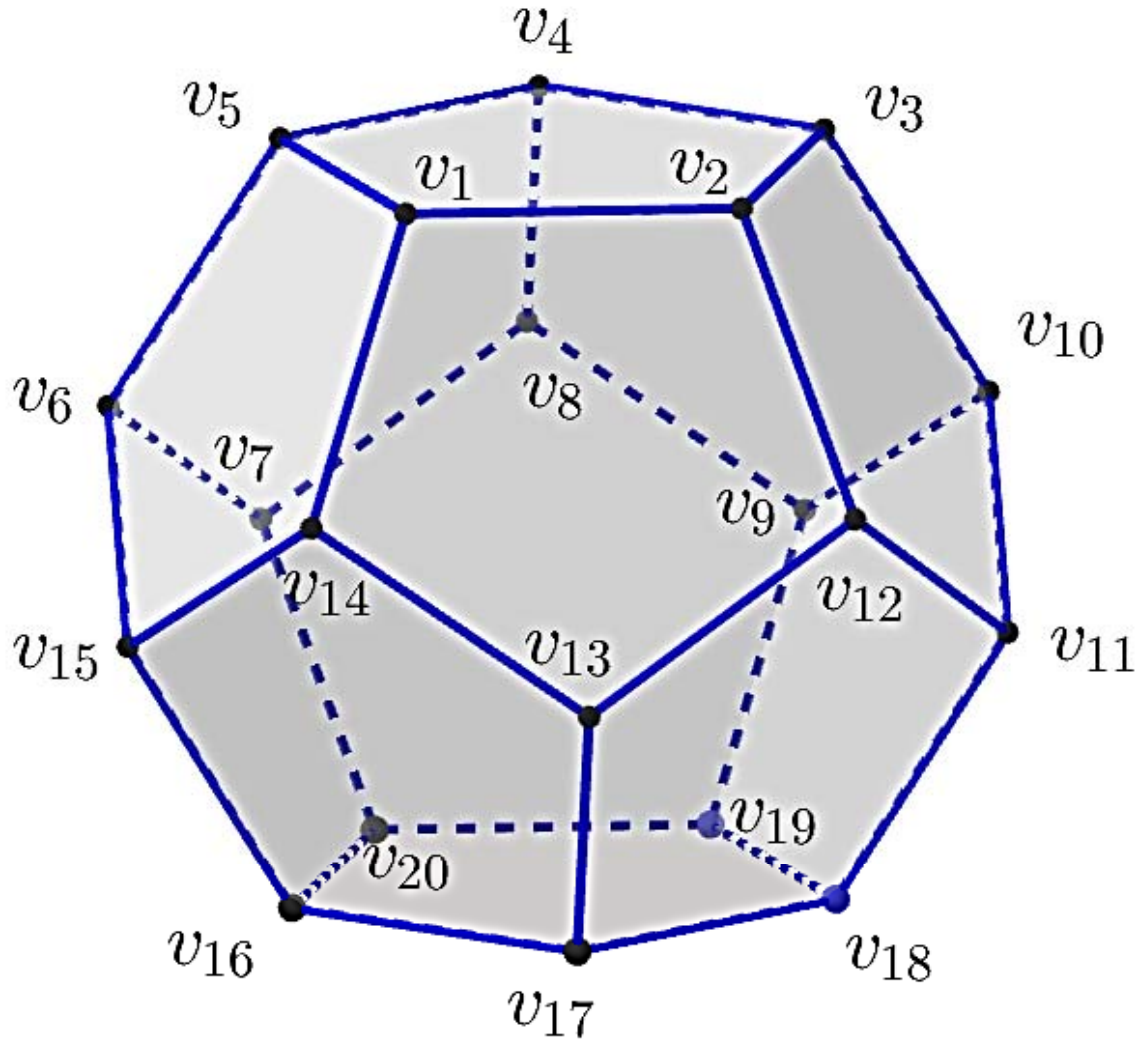}\\
\caption{}
\label{z12tu}
\end{figure}
Let $P$ be a regular dodecahedron.
 Suppose $P$ is  orthogonally decomposable.
 Take a pentagonal face $F$ of $P$. There are 5 pentagonal faces $F_i$ $(i=1,\ldots,5)$ adjacent to $F$.
 Each of those 6 faces has a mark (at least one). There are 3 marks.
 If  two faces have the same mark, say $x$, then their intersection line is parallel to the $x$-axis.
 So, if 3 faces have the same mark, then their intersection lines are parallel.
 Among the 15 intersection lines of our 6 faces, there are no parallel pairs, so no 3 faces have the same mark.
 Hence, the 6 faces form 3 pairs, each with the same mark. Suppose without loss of generality that $F,F_1$ form such a pair. Then the other two pairs are among $F_2,F_3,F_4,F_5$.
 If $F_2,F_3 $ have the same mark and $F_4,F_5$ too, then
 the lines $ \overline{F_2}\cap \overline{F_3}$  and  $ \overline{F_4}\cap \overline{F_5}$ should be orthogonal, but they are not. Neither are the lines $ \overline{F_2}\cap \overline{F_4}$  and $ \overline{F_3}\cap \overline{F_5}$, nor the lines $ \overline{F_2}\cap \overline{F_5}$  and $ \overline{F_3}\cap \overline{F_4}$. A contradiction is obtained.
\end{proof}

\begin{thm}\label{a10}
The truncated cuboctahedron is not  orthogonally decomposable.
\end{thm}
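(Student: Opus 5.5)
We plan to use only the necessary condition noted at the start of this section: if a polytope is orthogonally decomposable, then every face has a non-empty mark set. Since the statement concerns every position of the solid in $\mathds{R}^3$, we will show that no rotation gives all faces of the truncated cuboctahedron non-empty mark sets. The dihedral angles here are $135^\circ$ (square--octagon), about $144.7^\circ$ (square--hexagon) and about $125.3^\circ$ (hexagon--octagon). So Theorem \ref{dihe} does not apply directly, and we argue globally on face normals instead, in the spirit of the proof of Theorem \ref{z12}.

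Fix the truncated cuboctahedron in the standard position of its cube symmetry. Its $26$ faces have outer normals of three kinds: the $6$ octagons have normals $(\pm1,0,0)$ and permutations, the $12$ squares have normals $(\pm1,\pm1,0)$ and permutations, and the $8$ hexagons have normals $(\pm1,\pm1,\pm1)$. Up to sign these give $13$ lines through $\mathbf{0}$, namely the lines spanned by the nonzero vectors of $\{-1,0,1\}^3$ taken modulo $\pm$. Now rotate the coordinate axes arbitrarily, giving an orthonormal frame $a,b,c$. A face $G$ has mark $x$ if and only if its normal is orthogonal to $a$, and similarly for $y,z$. Hence every face has a non-empty mark set if and only if each of the $13$ lines lies in one of the three mutually orthogonal planes $a^\perp, b^\perp, c^\perp$.

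The key step is the claim that \emph{no plane through $\mathbf{0}$ contains more than $4$ of these $13$ lines}. If a plane contains two of the lines, its normal is parallel to their cross product, which is an integer vector with entries in $\{-2,\dots,2\}$. Up to the symmetries of the cube only a few normal types occur:
\begin{itemize}
\item $(1,0,0)$, which gives a coordinate plane containing $2$ axes and $2$ face diagonals;
\item $(1,1,0)$, which gives the plane $x=-y$ containing $(0,0,1)$, $(1,-1,0)$, $(1,-1,1)$ and $(1,-1,-1)$;
\item $(1,1,1)$, which gives $3$ lines of type $(1,-1,0)$;
\item $(1,1,2)$, $(0,1,2)$ and $(1,2,2)$, each of which gives at most $3$ lines.
\end{itemize}
We check each case by listing the vectors of $\{-1,0,1\}^3$ orthogonal to the normal. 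This enumeration is the main (routine but tedious) obstacle, and we would organize it by the type of the normal vector.

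Given the claim, the three planes $a^\perp,b^\perp,c^\perp$ together contain at most $3\cdot 4=12<13$ of the lines, counting lines lying in two planes only once. So some face has an empty mark set for every rotation. By the observation preceding Theorem \ref{dihe}, the truncated cuboctahedron is not orthogonally decomposable.
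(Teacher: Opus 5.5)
Your proof is correct, and it takes a genuinely different route from the paper.

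\textbf{The paper's route.} The paper argues locally around a single octagonal face $F$. If $\mathrm{card}(m(F))=2$, some edge of $F$ is not axis-parallel, and the face across it would have to be perpendicular to $F$, which is impossible. If $m(F)=\{x\}$, at most two edges of $F$ carry a mark, so at least six neighbours have marks in $\{y,z\}$. Three of them then share a mark, which forces a consecutive hexagon--square--hexagon triple around $F$. The edge between $F$ and that square is then parallel to the $x$-axis, and the next square receives an empty mark set. This works with mark sets of edges and the adjacency pattern of faces, and it relies on a figure to identify the admissible triples.

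\textbf{Your route.} You reduce everything to a linear-algebra fact: three planes through $\mathbf{0}$ cannot cover the $13$ lines spanned by $\{-1,0,1\}^3\setminus\{\mathbf{0}\}$, because each plane contains at most $4$ of them. Your enumeration is in fact already complete, not merely tedious. The six normal types give $4,4,3,2,1,1$ lines respectively, so the types $(0,1,2)$ and $(1,2,2)$ never even arise from a pair of lines.

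\textbf{What each buys.} Your argument is self-contained and figure-free, and needs neither adjacency nor dihedral angles. It does not even use that the three planes are mutually orthogonal. It applies verbatim to any polytope whose face normals realize all $13$ directions, e.g.\ the rhombicuboctahedron, giving an alternative to the dihedral-angle argument there. The paper's argument is local: it uses only one octagon and its eight neighbours, so it would apply to any polytope containing that configuration whatever its other faces are. It also fits the edge-mark technique of Theorems \ref{dihe} and \ref{z12}.
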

\begin{proof}
\begin{figure}[h]
\centering
\includegraphics[width=0.3\textwidth]{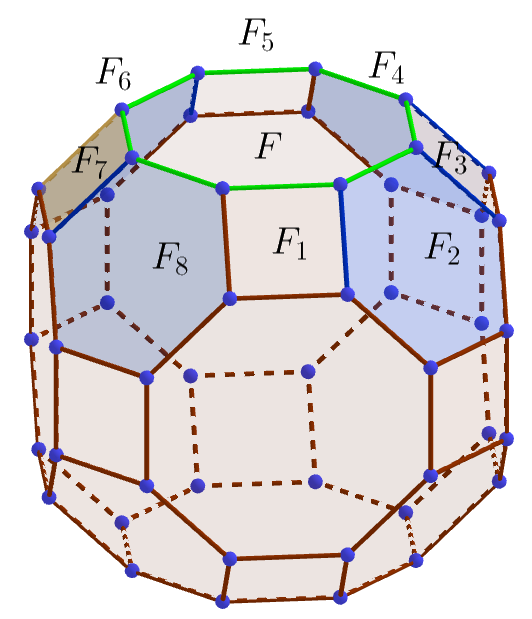}\\
\caption{}
\label{a10tu}
\end{figure}
Let $P$ be a truncated cuboctahedron.   Suppose $P$ is  orthogonally decomposable.
 Consider an octagonal face $F$ of $P$.  Then $ \mathrm{card}(m(F))\in \{1,2\}$. If $\mathrm{card}(m(F))=2 $, then $F$ has at most 4 edges parallel to some coordinate axis. If $E$ is one of the other edges of $F$, and $F'$ is the  face of $P$ with  $F\cap F'=E$, then $m(F')=\emptyset$, because $F$ and $F'$ are not perpendicular. This contradicts the  orthogonal decomposability of $P$.

 If $\mathrm{card}(m(F))=1 $, say $m(F)=\{x \} $, then at most 2 edges of $F$ have non-empty mark set.
   So, there are at least 6 faces adjacent to $F$ with at least  one mark each, different from  $\{x\}$.   Hence,  at least 3 faces have the same mark, say $\{y\} $.  This means that the three intersection lines of their planes are parallel to the $y$-axis. Among the triples of faces adjacent to $F$, only faces positioned like $F_8,F_1,F_2$ (see Figure \ref{a10tu}) have this property. Suppose without loss of generality they are precisely $F_8,F_1,F_2$. Then $m(F_1\cap F_2)=\{y\}$,  which implies that $m(F\cap F_1)=\{x\}$. But then, $m(F_3)=\emptyset$,
   which  contradicts the hypothesis.
\end{proof}

\section{Epilogue}

As a conclusion, the boundary of the cube is orthogonally connected,
and 2 Platonic solids and 4 Archimedean ones are
orthogonally decomposable, while the remaining 2 Platonic
and 9 Archimedean solids are not.

It would be interesting to know which polytopes among
the Catalan and Johnson solids have orthogonally
connected boundaries or are orthogonally decomposable.

Another question is: do there exist interesting classes of
non-convex polyhedral surfaces, which are orthogonally connected?

Moreover, one can investigate non-polyhedral topological spheres which
are orthogonally connected.

\section*{Acknowledgements}

This work was supported by NSF of China (12271139, 12201177, 11871192); the High-end Foreign Experts Recruitment Program of People's
Republic of China (G2023003003L); the Program for Foreign Experts of Hebei Province; the Hebei Natural Science
Foundation (A2023205045); the Special Project on Science and Technology Research and Development Platforms, Hebei Province (22567610H);
the Science and Technology
Project of Hebei Education Department (BJK2023092)
and the China Scholarship Council.

\end{document}